\def\N{{\mathbb N}}
\newcommand\casos[4]{
    \left\{
    \begin{array}{ll}
        #1 & \mbox{ #2 }\\
        #3 & \mbox{ #4 }\\
    \end{array}
    \right.
    }
\def\SS{{\mathcal S}}
\def\m{{\rm m}}
\def\Ap{{\rm Ap}}
\def\g{{\rm F}}
\newtheorem{theorem}{Theorem}
\newtheorem{lemma}[theorem]{Lemma}
\newtheorem{corollary}[theorem]{Corollary}
\newtheorem{proposition}[theorem]{Proposition}
\theoremstyle{remark}
\newtheorem{example}[theorem]{Example}
\newtheorem{remark}[theorem]{Remark}
\title{Patterns on numerical semigroups}
\author{Maria Bras-Amor\'os}
\thanks{
The first author was supported in part by the Spanish CICYT under Grant
TIC2003-08604-C04-01 and FEDER, Catalan DURSI under SGR2005-00319. The second
was supported by the project MTM2004-01446 and FEDER funds. The authors would
like to thank the referee for his/her many comments and suggestions.}
\address{Departament d'Enginyeria de la Informaci\'o i de les Comunicacions,
Universitat Aut\`onoma de Barcelona, E-08193 Bellaterra, Spain} 
\email{maria.bras@uab.es}
\author{Pedro A. Garc\'ia-S\'anchez}
\address{Departamento de \'Algebra, Universidad de Granada, E-18071 Granada, Spain}
\email{pedro@ugr.es}
\date{\today}
\subjclass[2000]{20M14}
\keywords{Numerical semigroup, Arf semigroup}
\begin{document}

\begin{abstract}
We introduce the notion of pattern for numerical semigroups, which allows us
to generalize the definition of Arf numerical semigroups. In this way
infinitely many other classes of numerical semigroups are defined giving a
classification of the whole set of numerical semigroups. In particular, all
semigroups can be arranged in an infinite non-stabilizing ascending chain
whose first step consists just of the trivial semigroup and whose second step
is the well known class of Arf semigroups. We describe a procedure to compute
the closure of a numerical semigroup with respect to a pattern. By using the
concept of system of generators associated to a pattern we construct
recursively a directed acyclic graph with all the semigroups admitting the
pattern.

\end{abstract}

\maketitle
\begin{center} \resizebox{\textwidth}{!}{Published in {\bf Linear Algebra and its Applications, Elsevier, vol. 414, pp. 652-669, April 2006.}} \end{center}

\section*{Introduction}

A {\em numerical semigroup} is a subset of $\N$ containing $0$, closed under addition and with
finite complement in $\N$ (here $\N$ denotes the set of nonnegative integers).
The theory of numerical semigroups is intimately related to the study of the non-negative integer
solutions of a linear equation in several unknowns with coefficients in $\N$
\cite{sylvester,brauer,brauer-shockley,johnson,selmer}.
Applications of numerical semigroups are found in the study of the
parameters of algebraic-geometry codes \cite{FeRa:dFR,KiPe:telescopic,HoLiPe:agc}.

For a numerical
semigroup $\Lambda$, the {\em multiplicity} of $\Lambda$, denoted by $\m(\Lambda)$, is the
smallest non-negative element of $\Lambda$, and the {\em conductor} of $\Lambda$ is the only
integer $c\in\Lambda$ such that $c-1\not\in\Lambda$ and $c+\N\subseteq\Lambda$
\cite{HoLiPe:agc}. Usually the element $c-1$ is known as the Frobenius number of $\Lambda$,
denoted here by $\g(\Lambda)$. Clearly, $\g(\Lambda)$ is the maximum of $\N\setminus \Lambda$.
Let $A$ be a subset of $\N$. The submonoid of $\N$ {\em generated} by $A\subseteq \N$ is the
smallest (with respect to set inclusion) submonoid of $\N$ containing $A$, and it is denoted
usually by $\langle A\rangle$, that is,
\[ \langle A\rangle =\left\{\sum_{i=1}^n k_i a_i ~|~ n\in \N, k_i\in \N, a_i\in A \hbox{ for all }
i\in \{1,\ldots,n\}\right\}.\] It is not hard to prove that $\langle A\rangle$ is a numerical
semigroup if and only if the greatest common divisor of the elements of $A$ is one. If $\Lambda$
is a numerical semigroup and $A$ is a subset of $\Lambda$, then we say that $A$ is a {\em system
of generators} (or simply that $A$ {\em generates} $\Lambda$) if $\langle A\rangle = \Lambda$.
We say that $A$ is a minimal system of generators of $\Lambda$ if in addition no proper subset
of $A$ generates $\Lambda$. Every numerical semigroup has a unique minimal system of generators.

A numerical semigroup $\Lambda$ is said to be {\em Arf} if for every $x,y,z\in
\Lambda$ with $x\geq y\geq z$, it holds that $x+y-z\in\Lambda$. Arf numerical
semigroups and their applications to coding theory have been widely studied in
\cite{BaDoFo,Bras:AAECC,BrOS,Bras:acute,CaFaMu:arf,RoGaGaBr}. In this work we
try to generalize the idea of Arf numerical semigroup to a semigroup
satisfying the condition that a certain expression on any decreasing sequence
of elements of the semigroup belongs always to the semigroup. The expression
is what we call a pattern.

Furthermore, in \cite{RoGaGaBr} the authors introduce the notion of the Arf
closure of a numerical semigroup $\Lambda$ as the smallest Arf numerical
semigroup containing $\Lambda$ (the idea of Arf closure appears for algebraic
curves in \cite{Arf}, though of course not with this name). In this work this
idea is generalized for other patterns and we define a procedure to obtain
such closure. We also extend the concept of an Arf-system of generators to any
pattern and show how to construct recursively a directed acyclic graph with
all the numerical semigroups admitting a given pattern.

In Section~\ref{section:patterns} we give the definition and some examples of patterns.
In Section~\ref{section:admissible} we characterize those patterns that can be admitted at least
by one numerical semigroup. This enables us to define admissible patterns.
Section~\ref{section:lock-relock} introduces the concept of strongly admissible pattern. The
advantage of these patterns is that one can effectively (computationally) deal with them. In
Section~\ref{section:closures} we give the definition and a procedure to obtain the closure of a
numerical semigroup with respect to a pattern. In the next section we introduce the concept of
$p$-system of generators for a numerical semigroup admitting the pattern $p$. The uniqueness of
minimal $p$-systems of generators for a given semigroup can be ensured when the pattern $p$ is
strongly admissible. We will show how to use this information to construct the set of all
numerical semigroups that admit a given strongly admissible pattern. In
Section~\ref{section:substraction} we show that each numerical semigroup admits infinitely many
patterns. In particular there will be a pattern giving information on ``how far'' from
substraction a semigroup is. This will yield an infinite non-stabilizing ascending chain of sets
of numerical semigroups containing all numerical semigroups. The concept of substraction pattern
generalizes that of the Arf pattern. In the last section we go one step beyond by presenting the
concept of boolean pattern, for which we can give invariants for equivalent patterns in this
class.

\section{Patterns}
\label{section:patterns}

A {\em pattern} $p$ of length $n$ is a linear homogeneous polynomial with non-zero integer
coefficients in $x_1,\dots,x_n$ (for $n=0$ the unique pattern is $p=0$). We say that a numerical
semigroup $\Lambda$ {\em admits} a pattern $p(x_1,\dots,x_n)$ if for every $n$ elements
$s_1,\dots, s_n$ in $\Lambda$ with $s_1\geq s_2\geq\dots\geq s_n$, the integer
$p(s_1,\dots,s_n)$ belongs to $\Lambda$. We denote by $\SS(p)$ the set of all numerical
semigroups admitting $p$.

\begin{example}
\label{ejemplo 1} Patterns with positive coefficients are admitted by all
numerical semigroups. The same trivially stands for the zero pattern.\qed
\end{example}

\begin{example}
Consider the pattern $p=x_1+x_2-x_3$. A numerical semigroup is Arf if and only
if it admits the pattern $p$. The pattern $p$ will be called the {\em Arf
pattern}.

A numerical semigroup is said to be of {\em maximal embedding dimension} if its multiplicity
equals the cardinality of its minimal system of generators (known as the embedding dimension of
the semigroup). From \cite[Proposition I.2.9]{BaDoFo}, one can easily derive that a numerical
semigroup $\Lambda$ is of maximal embedding dimension if and only if for all $x\geq y\in
\Lambda$, $x,y\geq \m(\Lambda)$, one has that $x+y-\m(\Lambda)\in \Lambda$. Observe that
$p(x_1,x_2)=x_1+x_2-\m(\Lambda)$ is not linear, and thus it is not a pattern. Note also, that
this in particular means that every Arf numerical semigroup is of maximal embedding dimension.
\qed
\end{example}

\begin{example}
The only numerical semigroup that admits the pattern $q=x_1-x_2$ is $\N$.
Indeed, suppose that $\Lambda$ admits this pattern and let $c$ be the
conductor of $\Lambda$. Consider $s_1=c+1$ and $s_2=c$. Since $\Lambda$ admits
$q$, $s_1-s_2=1$ belongs to $\Lambda$ and thus, $\Lambda=\N$. Consequently,
$q$ will be called the {\em trivializing} pattern.\qed
\end{example}


We say that a pattern $p_1$ {\em induces} another pattern $p_2$ if every numerical semigroup
admitting $p_1$ admits also $p_2$. We say that two patterns are {\em equivalent} if they induce
each other.

\begin{example}
All patterns in Example~\ref{ejemplo 1} are equivalent.
\end{example}

\begin{example}
The trivializing pattern induces the Arf pattern. The Arf pattern and the pattern $2x_1-x_2$ are
equivalent \cite[Proposition 1]{CaFaMu:arf}. Actually, by using the same argument given in that
proposition, it is not hard to prove that for $n\geq 2$, the patterns $x_1+\cdots+x_n-x_{n+1}$
and $x_1+\cdots+x_{n-2}+2x_{n-1}-x_n$ are equivalent. However, in general it is not true that
$x_1+\cdots+ x_n-x_{n+1}$ is equivalent to $x_1+(n-1)x_2-x_3$ (see Example \ref{varios-p}).\qed
\end{example}

\begin{lemma}
\label{lemma:un patro indueix les parts} A pattern $p=\sum_{i=1}^na_ix_i$ induces all patterns
$p_{n'}=\sum_{i=1}^{n'}a_ix_i$ with $n'\leq n$.
\end{lemma}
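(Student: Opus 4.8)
The plan is to prove the statement directly from the definition of ``induces'': I would take an arbitrary numerical semigroup $\Lambda$ that admits $p=\sum_{i=1}^n a_ix_i$ and show that $\Lambda$ also admits $p_{n'}=\sum_{i=1}^{n'}a_ix_i$ for each $n'\leq n$. The single idea driving the argument is that every numerical semigroup contains $0$, so any short decreasing sequence can be padded out to full length $n$ by appending zeros, staying inside $\Lambda$ and preserving monotonicity, while the homogeneity of $p$ guarantees that the appended coordinates contribute nothing to the value.

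Concretely, I would fix $n'\leq n$ and let $s_1\geq s_2\geq\cdots\geq s_{n'}$ be arbitrary elements of $\Lambda$. Setting $s_{n'+1}=\cdots=s_n=0$, and using $0\in\Lambda$ together with $s_{n'}\geq 0$, the extended tuple satisfies $s_1\geq\cdots\geq s_{n'}\geq 0\geq\cdots\geq 0$ and consists of $n$ elements of $\Lambda$. Since $\Lambda$ admits $p$, the integer $p(s_1,\dots,s_{n'},0,\dots,0)$ lies in $\Lambda$. But evaluating $p$ on this tuple annihilates every term with index $i>n'$, so $p(s_1,\dots,s_{n'},0,\dots,0)=\sum_{i=1}^{n'}a_is_i=p_{n'}(s_1,\dots,s_{n'})$. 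Hence $p_{n'}(s_1,\dots,s_{n'})\in\Lambda$, and as the $s_i$ were arbitrary, $\Lambda$ admits $p_{n'}$, which is what was required.

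Before running this I would record that $p_{n'}$ is genuinely a pattern of length $n'$: its coefficients $a_1,\dots,a_{n'}$ are among those of $p$ and hence are non-zero integers, as the definition demands. I do not expect any real obstacle here; the only point that needs care is the legitimacy of padding with zeros, which rests entirely on $0\in\Lambda$ and on the homogeneity of $p$. The extreme cases are immediate: $n'=n$ is trivial, and $n'=0$ gives the zero pattern, which is admitted by every numerical semigroup by Example~\ref{ejemplo 1}.
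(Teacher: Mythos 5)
Your proof is correct and is essentially identical to the paper's own argument: both pad the shorter decreasing sequence with zeros (legitimate since $0\in\Lambda$ and monotonicity is preserved) and use the vanishing of the extra terms to conclude $p_{n'}(s_1,\dots,s_{n'})=p(s_1,\dots,s_{n'},0,\dots,0)\in\Lambda$. Your added checks (that $p_{n'}$ is genuinely a pattern, and the trivial cases $n'=n$ and $n'=0$) are fine but not needed beyond what the paper does.
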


\begin{proof} Suppose $\Lambda$ admits $p$. Then the integer $p(s_1,\dots,s_n)$ belongs to
$\Lambda$ for every $n$ elements $s_1,\dots, s_n$ in $\Lambda$ with $s_1\geq s_2\geq \dots\geq
s_n$. In particular, we can take $s_{n'+1}=s_{n'+2}=\dots=s_n=0$, and we have that
$p_{n'}(s_1,\dots,s_{n'})$ belongs to $\Lambda$ for every $n'$ elements $s_1,\dots, s_{n'}$ in
$\Lambda$ with $s_1\geq s_2\geq\dots \geq s_{n'}$.
\end{proof}

\begin{lemma} \label{lemma:a un patro li podem intercalar una variable} A pattern
$p=\sum_{i=1}^na_ix_i$ induces all $(n+1)$-length patterns
$$\check{p}_{(j)}=\sum_{i=1}^{j-1}a_ix_i+x_j+\sum_{i=j}^{n}a_ix_{i+1}.$$
\end{lemma}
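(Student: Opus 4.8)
The plan is to verify the defining condition for admitting $\check{p}_{(j)}$ directly, reducing it to the condition for $p$. The crucial observation is that the intercalated variable $x_j$ carries coefficient exactly $1$, so on any input the value of $\check{p}_{(j)}$ splits as the value of $p$ on the remaining $n$ coordinates plus the single term $s_j$, which is itself an element of the semigroup.

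First I would fix a numerical semigroup $\Lambda\in\SS(p)$ together with an arbitrary weakly decreasing sequence $s_1\geq s_2\geq\cdots\geq s_{n+1}$ of elements of $\Lambda$; the goal is to place $\check{p}_{(j)}(s_1,\dots,s_{n+1})$ in $\Lambda$. I would then form the length-$n$ tuple obtained by deleting the $j$-th entry, namely $(s_1,\dots,s_{j-1},s_{j+1},\dots,s_{n+1})$. The point to record here is that this truncated tuple is still weakly decreasing: all inherited adjacencies are fine, and the only new one, $s_{j-1}\geq s_{j+1}$, follows at once from $s_{j-1}\geq s_j\geq s_{j+1}$. Hence this tuple is an admissible input for $p$.

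Next I would match coefficients. By the definition of $\check{p}_{(j)}=\sum_{i=1}^{j-1}a_ix_i+x_j+\sum_{i=j}^{n}a_ix_{i+1}$, the coefficients on positions $1,\dots,j-1$ are $a_1,\dots,a_{j-1}$ and those on positions $j+1,\dots,n+1$ are $a_j,\dots,a_n$, while position $j$ contributes simply $s_j$. Therefore $p(s_1,\dots,s_{j-1},s_{j+1},\dots,s_{n+1})=\check{p}_{(j)}(s_1,\dots,s_{n+1})-s_j$. Since $\Lambda$ admits $p$ and the truncated tuple is decreasing, the left-hand side lies in $\Lambda$; as $s_j\in\Lambda$ and $\Lambda$ is closed under addition, adding $s_j$ keeps us inside $\Lambda$, so $\check{p}_{(j)}(s_1,\dots,s_{n+1})\in\Lambda$, which is exactly what admitting $\check{p}_{(j)}$ means.

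I anticipate no serious obstacle: the whole argument is the bookkeeping that deleting the coordinate $s_j$ realigns the coefficients of $\check{p}_{(j)}$ with those of $p$, the leftover $+s_j$ being harmless precisely because its coefficient is $1$ and $\Lambda$ is additively closed. The only step demanding a moment's care is the boundary check that the truncated sequence stays ordered, handled by the single inequality $s_{j-1}\geq s_{j+1}$ above.
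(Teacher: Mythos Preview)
Your proof is correct and follows essentially the same approach as the paper: both arguments observe that $\check{p}_{(j)}(s_1,\dots,s_{n+1})=p(s_1,\dots,s_{j-1},s_{j+1},\dots,s_{n+1})+s_j$, use that $\Lambda$ admits $p$ on the truncated (still decreasing) tuple, and then close under addition with $s_j\in\Lambda$. Your write-up is simply more explicit about the monotonicity check $s_{j-1}\geq s_{j+1}$ and the coefficient bookkeeping.
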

\begin{proof}
If $\Lambda$ admits $p$, then
$p(s_1,s_2,\dots,s_{j-1},s_{j+1},\dots,s_{n+1})\in\Lambda$ for all $s_1\geq
s_2\geq\dots\geq s_{j-1}\geq s_{j+1}\geq \dots\geq s_{n+1}$. Now,
\begin{multline*}
\check{p}_{(j)}(s_1,s_2,\dots,s_{j-1},s_j,s_{j+1},\dots,s_{n+1})=\\
p(s_1,s_2,\dots,s_{j-1},s_{j+1},\dots,s_{n+1})+s_j,
\end{multline*} which is
clearly in $\Lambda$ for all $s_j\in\Lambda$.
\end{proof}

The next proposition together with Example~\ref{ejemplo 1} points out that
every pattern is either equivalent to the zero pattern or equivalent to a
pattern with the last coefficient negative.
\begin{proposition} \label{proposition:a un patro se li pot treure la cua negativa}
Let $p=\sum_{i=1}^n a_ix_i$ be a pattern. Suppose that $a_{n'}<0$ and that $a_{n'+1}, a_{n'+2},
\dots, a_{n}$ are positive. Then $p$ is equivalent to $p_{n'}=\sum_{i=1}^{n'}a_ix_i$.
\end{proposition}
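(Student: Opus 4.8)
The plan is to show mutual induction: $p$ induces $p_{n'}$ and $p_{n'}$ induces $p$. The first direction is immediate from Lemma~\ref{lemma:un patro indueix les parts}, since $p_{n'}=\sum_{i=1}^{n'}a_ix_i$ is precisely the truncation of $p=\sum_{i=1}^n a_ix_i$ to its first $n'$ terms, and that lemma guarantees any pattern induces all its initial-segment truncations. So the entire content lies in the reverse direction: assuming $\Lambda$ admits $p_{n'}$, I must prove $\Lambda$ admits $p$.

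For the reverse direction, suppose $\Lambda\in\SS(p_{n'})$ and take any elements $s_1\geq s_2\geq\cdots\geq s_n$ in $\Lambda$. I want to conclude $p(s_1,\dots,s_n)\in\Lambda$. The key observation is that
\[
p(s_1,\dots,s_n)=p_{n'}(s_1,\dots,s_{n'})+\sum_{i=n'+1}^{n}a_i s_i,
\]
and by hypothesis the first summand $p_{n'}(s_1,\dots,s_{n'})$ already lies in $\Lambda$, while every coefficient $a_i$ for $i>n'$ is positive (this is exactly the assumption $a_{n'+1},\dots,a_n>0$). Since $\Lambda$ is a numerical semigroup, it is closed under addition, so adding the nonnegative-integer combination $\sum_{i=n'+1}^{n}a_i s_i$ of elements $s_i\in\Lambda$ keeps us inside $\Lambda$. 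Hence $p(s_1,\dots,s_n)\in\Lambda$, which is what was needed.

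I would present this cleanly by factoring the argument: first invoke Lemma~\ref{lemma:un patro indueix les parts} for one inclusion, then for the other write $p$ as $p_{n'}$ plus a tail with positive coefficients and use closure under addition. It is worth remarking that the hypothesis $a_{n'}<0$ plays no role in the proof itself; it is there only to identify $p_{n'}$ as a genuine pattern ending in a negative coefficient, aligning with the statement preceding the proposition about every pattern being equivalent to the zero pattern or to one with negative last coefficient.

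There is no real obstacle here, since both inclusions reduce to earlier machinery and to the semigroup axioms; the only point requiring the slightest care is making explicit that the tail $\sum_{i=n'+1}^n a_i s_i$ is a nonnegative integer combination of semigroup elements (using $a_i>0$ and $s_i\in\Lambda$) so that closure applies, and that the degenerate case $n'=n$, where the tail is empty, is covered trivially by $p=p_{n'}$.
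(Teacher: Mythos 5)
Your proof is correct, and the first direction coincides exactly with the paper's (both invoke Lemma~\ref{lemma:un patro indueix les parts}). For the converse the paper takes a different route: it obtains $p$ from $p_{n'}$ by repeated application of Lemma~\ref{lemma:a un patro li podem intercalar una variable}, which inserts a new variable with coefficient $1$ at a chosen position, whereas you verify directly that any $\Lambda$ admitting $p_{n'}$ admits $p$, by splitting $p(s_1,\dots,s_n)$ as $p_{n'}(s_1,\dots,s_{n'})$ plus the tail $\sum_{i=n'+1}^{n}a_i s_i$ and using closure of $\Lambda$ under addition. The two arguments rest on the same underlying observation (the proof of Lemma~\ref{lemma:a un patro li podem intercalar una variable} is precisely this decomposition with a single unit term), but your version is tighter as a matter of bookkeeping: the cited lemma only ever inserts variables with coefficient $1$, so when some tail coefficient $a_i$ exceeds $1$, the paper's chain of insertions produces a pattern of the form $\sum_{i=1}^{n'}a_ix_i+x_{n'+1}+\cdots+x_N$ rather than $p$ itself, and one must still pass from that pattern to $p$ by evaluating at non-increasing sequences with repeated entries (identifying the inserted unit variables). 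That last step is easy but left implicit in the paper; your single decomposition absorbs it and handles arbitrary positive integer tail coefficients at once. Your side remarks are also accurate: the hypothesis $a_{n'}<0$ is never used in the argument (it only serves to present $p_{n'}$ as a pattern ending in a negative coefficient, matching the discussion preceding the proposition), and the degenerate case $n'=n$ is trivial.
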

\begin{proof} The pattern $p$ induces the pattern $p_{n'}$ by Lemma~\ref{lemma:un
patro indueix les parts}. The pattern $p_{n'}$ induces the pattern $p_n$ by
applying  Lemma~\ref{lemma:a un patro li podem intercalar una variable}
several times.
\end{proof}


\section{Admissible patterns}
\label{section:admissible}

For certain patterns $p$ the set $\SS(p)$ is empty and for this reason we are not interested in
them. In this section we characterize those patterns $p$ for which $\SS(p)$ is not empty. To
this end, we need a couple of technical lemmas, one of which will be also used in the last
section.

\begin{lemma}\label{dos} Let $p=\sum_{i=1}^na_ix_i$ be a linear homogeneous polynomial and let
$S=\sum_{i=1}^na_i$. Assume that $\sum_{i=1}^m a_i\geq 0$ for all
$m\in\{1,\ldots,n\}$. Then for all non-negative integers $s_1\geq
s_2\geq\cdots\geq s_n$, $p(s_1,\dots,s_n)\geq S\,s_n$.
\end{lemma}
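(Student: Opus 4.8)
The plan is to prove the inequality by a summation-by-parts (Abel summation) argument, the whole point being to convert the hypothesis on the partial sums of the $a_i$ and the monotonicity of the $s_i$ into a sum of manifestly non-negative terms. First I would introduce the partial sums $A_m=\sum_{i=1}^m a_i$ for $m\in\{1,\dots,n\}$, together with $A_0=0$, so that the hypothesis reads simply $A_m\geq 0$ for every $m$, while $A_n=S$ is the quantity appearing on the right-hand side.

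The key step is to re-express each $s_i$ through the consecutive gaps. Since $s_1\geq s_2\geq\cdots\geq s_n\geq 0$, I would set $d_j=s_j-s_{j+1}\geq 0$ for $j\in\{1,\dots,n-1\}$ and $d_n=s_n\geq 0$, so that $s_i=\sum_{j=i}^n d_j$ by telescoping. Substituting this into $p(s_1,\dots,s_n)=\sum_{i=1}^n a_i s_i$ and interchanging the order of summation over the triangular range $\{(i,j):1\leq i\leq j\leq n\}$ gives the identity
\[
p(s_1,\dots,s_n)=\sum_{i=1}^n a_i\sum_{j=i}^n d_j
=\sum_{j=1}^n\left(\sum_{i=1}^j a_i\right)d_j=\sum_{j=1}^n A_j\,d_j.
\]
From here the conclusion is immediate: isolating the last term yields $p(s_1,\dots,s_n)=A_n d_n+\sum_{j=1}^{n-1}A_j d_j=S\,s_n+\sum_{j=1}^{n-1}A_j d_j$, and since every $A_j\geq 0$ by hypothesis and every $d_j\geq 0$ by the ordering of the $s_i$, the trailing sum is non-negative, so $p(s_1,\dots,s_n)\geq S\,s_n$.

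The step that requires the most care is the interchange of summation: one must check that summing over the triangular index set reproduces exactly the partial sums $A_j$, and that the $j=n$ term, where $A_n=S$ and $d_n=s_n$, is precisely what reconstructs the target $S\,s_n$. There is no deeper obstacle, as the partial-sum hypothesis and the monotonicity of the $s_i$ supply exactly the two positivity facts needed. An alternative that is essentially the unrolled version of this argument is an induction on $n$: writing $s_i=(s_i-s_n)+s_n$ peels off the term $S\,s_n$ and reduces the problem to the truncated pattern $\sum_{i=1}^{n-1}a_ix_i$ evaluated at the decreasing non-negative sequence $s_1-s_n\geq\cdots\geq s_{n-1}-s_n$, whose first $n-1$ partial sums are inherited unchanged; I would nonetheless favour the telescoping form above, since it produces the result in a single transparent identity.
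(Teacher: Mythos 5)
Your proof is correct, but it takes a genuinely different route from the paper's. The paper argues by induction on $n$: it writes $p(s_1,\dots,s_{n+1})=p(s_1,\dots,s_n,0)+a_{n+1}s_{n+1}$, applies the induction hypothesis to the truncated polynomial $\sum_{i=1}^n a_ix_i$ to obtain the lower bound $\tilde{S}s_n+a_{n+1}s_{n+1}$ with $\tilde{S}=\sum_{i=1}^n a_i$, and then uses $\tilde{S}\geq 0$ together with $s_n\geq s_{n+1}$ to pass from $\tilde{S}s_n$ to $\tilde{S}s_{n+1}$, giving exactly $S\,s_{n+1}$. Your Abel-summation identity $p(s_1,\dots,s_n)=\sum_{j=1}^n A_j d_j$ is, in effect, the closed form that the paper's induction unrolls into, but as a proof strategy it is different: rather than a chain of inequalities, you produce an exact decomposition in which $p(s_1,\dots,s_n)-S\,s_n=\sum_{j=1}^{n-1}A_j d_j$ is a sum of manifestly non-negative terms, which proves the bound in a single step and additionally quantifies the slack (it shows precisely when equality holds, namely when $A_jd_j=0$ for all $j<n$). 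The inductive variant you sketch at the end, subtracting $s_n$ from every variable, is close in spirit to the paper's argument, though the paper instead zeroes the last variable and peels off the term $a_{n+1}s_{n+1}$; both reductions work. What the paper's induction buys is brevity and consistency with its recurring use of truncated patterns $p_{n'}$; what your identity buys is a transparent, non-inductive argument with an explicit error term.
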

\begin{proof}
We proceed by induction on $n$. It is clear for $n=1$. Assume that the result holds for any
linear homogeneous polynomial in $n$ unknowns. Let $p=\sum_{i=1}^{n+1}a_ix_i$, let
$S=\sum_{i=1}^{n+1}a_i$ and let $\tilde{S}=\sum_{i=1}^na_i$. Then,
\begin{multline*}p(s_1,\dots,s_{n+1})=p(s_1,\dots,s_{n},0)+a_{n+1}s_{n+1}\\\geq \tilde{S}
s_n+a_{n+1}s_{n+1}\geq \tilde{S} s_{n+1}+a_{n+1}s_{n+1}\geq S s_{n+1} \end{multline*}
\end{proof}

\begin{lemma}
\label{lemma:N admet tots els adimssibles} If a pattern $p=\sum_{i=1}^na_ix_i$ satisfies
$\sum_{i=1}^{n'}a_i\geq 0$ for all $n'\leq n$, then $\N$ admits $p$.
\end{lemma}
\begin{proof} To prove that $\N$ admits $p$ we need to prove that $p(s_1,\dots,s_n)\geq 0$ for all
$s_1\geq s_2\geq\dots \geq s_n$ with $s_1,\ldots,s_n\in \N$. This follows easily by applying
Lemma \ref{dos}, since we know that $p(s_1,\ldots,s_n)\geq (\sum_{j=1}^n a_j)s_n$, which
trivially belongs to $\N$.
%
%
%
\end{proof}

\begin{lemma}\label{11}
 If a pattern $p=\sum_{i=1}^na_ix_i$ does not satisfy $\sum_{i=1}^{n'}a_i\geq 0$ for all
$n'\leq n$, then there is no numerical semigroup admitting $p$.
\end{lemma}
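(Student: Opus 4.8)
The plan is to prove the contrapositive structure directly: assume there is a numerical semigroup $\Lambda$ admitting $p$, and derive that $\sum_{i=1}^{n'}a_i\geq 0$ must hold for every $n'\leq n$. By Lemma~\ref{lemma:un patro indueix les parts}, $\Lambda$ also admits each truncated pattern $p_{n'}=\sum_{i=1}^{n'}a_ix_i$, so it suffices to show that for each fixed $n'$ the admission of $p_{n'}$ forces $\sum_{i=1}^{n'}a_i\geq 0$. This reduces the whole statement to a single uniform claim about one pattern and its leading-coefficient-sum.

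First I would fix $n'$ and let $\Lambda$ be any numerical semigroup admitting $p_{n'}$, with conductor $c$. The idea is to feed $p_{n'}$ a decreasing sequence all of whose entries are large and \emph{equal}, so that the homogeneity collapses the value of $p_{n'}$ onto the coefficient sum. Concretely, take $s_1=s_2=\cdots=s_{n'}=s$ for a single element $s\in\Lambda$ with $s\geq c$ (such $s$ exists and is as large as we like, since $\Lambda$ contains $c+\N$). Then the decreasing condition $s_1\geq\cdots\geq s_{n'}$ holds trivially, admission gives $p_{n'}(s,\dots,s)=\bigl(\sum_{i=1}^{n'}a_i\bigr)s\in\Lambda\subseteq\N$, and since $s>0$ this already forces $\sum_{i=1}^{n'}a_i\geq 0$.

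That single substitution essentially settles it, so the main obstacle is not the arithmetic but making sure the element chosen is genuinely admissible and that no degenerate case slips through. The point to be careful about is that the value $(\sum a_i)s$ must be forced into $\Lambda$, and the cleanest way is to note that membership in $\Lambda\subseteq\N$ already rules out a negative product when $s>0$; one does not even need $s$ to exceed the conductor for the nonnegativity conclusion, though choosing $s\geq c$ (or simply any positive $s\in\Lambda$, which exists because $\Lambda$ has finite complement) guarantees a valid input. I would also remark that running this for every $n'\leq n$ yields all the required inequalities simultaneously, completing the proof. If one prefers to avoid the constant substitution, an alternative is to invoke Lemma~\ref{dos} in reverse spirit: should some partial sum be negative, one can exhibit a decreasing sequence driving $p_{n'}$ below zero, but the equal-entries trick is shorter and self-contained.
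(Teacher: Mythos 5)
Your proof is correct and is essentially the paper's own argument: the paper directly substitutes $s_1=\cdots=s_{n'}=l$ and $s_{n'+1}=\cdots=s_n=0$ for a nonzero $l\in\Lambda$, so that $p$ evaluates to $\bigl(\sum_{i=1}^{n'}a_i\bigr)l$, which is negative and hence not in $\Lambda$. Your contrapositive framing and your appeal to Lemma~\ref{lemma:un patro indueix les parts} merely repackage that same zero-padding plus equal-entries substitution, since that lemma's proof is exactly the substitution of zeros for the tail variables.
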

\begin{proof} Suppose that there exists $n'\leq n$ such that $\sum_{i=1}^{n'}a_i< 0$. Let $\Lambda$
be a numerical semigroup and let $l$ be a non-zero element of $\Lambda$. Take
$s_1=s_2=\dots=s_{n'}=l$ and $s_{n'+1}=\dots=s_n=0$. It is obvious that $\sum_{i=1}^{n}a_is_i<
0$ and thus it is not in $\Lambda$.
\end{proof}

As a consequence of the preceeding lemmas we have the next theorem.
\begin{theorem}
\label{theorem:condicions equivalents a admissible} Given a pattern $p=\sum_{i=1}^na_ix_i$, the
following conditions are equivalent.
\begin{itemize}
\item There exists a numerical semigroup that admits $p$, \item $\N$ admits $p$,

\item $\sum_{i=1}^{n'}a_i\geq 0$ for all $n'\leq n$.
\end{itemize}
\end{theorem}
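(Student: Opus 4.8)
The plan is to prove the theorem by establishing a cycle of implications among the three stated conditions, reusing the three lemmas that precede it in the excerpt. The three conditions are: (i) there exists a numerical semigroup admitting $p$; (ii) $\N$ admits $p$; and (iii) $\sum_{i=1}^{n'}a_i\geq 0$ for all $n'\leq n$. Since these lemmas already do essentially all the work, the proof amounts to chaining them together, and I would arrange the argument as (iii)$\Rightarrow$(ii)$\Rightarrow$(i)$\Rightarrow$(iii).

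First I would show (iii)$\Rightarrow$(ii). This is exactly the content of Lemma~\ref{lemma:N admet tots els adimssibles}: if the partial-sum condition $\sum_{i=1}^{n'}a_i\geq 0$ holds for all $n'\leq n$, then $\N$ admits $p$. Nothing new needs to be done here beyond citing that lemma.

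Next I would show (ii)$\Rightarrow$(i). This is immediate and essentially trivial: $\N$ is itself a numerical semigroup (it contains $0$, is closed under addition, and has empty, hence finite, complement in $\N$), so if $\N$ admits $p$ then there does exist a numerical semigroup admitting $p$, namely $\N$ itself. I would state this in one sentence.

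Finally I would close the cycle with (i)$\Rightarrow$(iii), which is the contrapositive of Lemma~\ref{11}: that lemma says that if the partial-sum condition fails for some $n'\leq n$, then no numerical semigroup admits $p$. Equivalently, if some numerical semigroup admits $p$, then the partial-sum condition must hold. I expect no genuine obstacle in this proof, since the theorem is explicitly flagged in the excerpt as ``a consequence of the preceeding lemmas''; the only care required is to make sure the three implications are arranged so that each is covered by exactly one of the available results, which the ordering (iii)$\Rightarrow$(ii)$\Rightarrow$(i)$\Rightarrow$(iii) achieves cleanly.
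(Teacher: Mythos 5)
Your proposal is correct and is exactly the argument the paper intends: the paper gives no written proof, stating only that the theorem is ``a consequence of the preceeding lemmas,'' namely Lemma~\ref{lemma:N admet tots els adimssibles} for (iii)$\Rightarrow$(ii) and Lemma~\ref{11} (in contrapositive form) for (i)$\Rightarrow$(iii), with (ii)$\Rightarrow$(i) being trivial. Your cycle of implications spells this out cleanly and matches the paper's approach.
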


The patterns
satisfying any of the three equivalent conditions in Theorem~\ref{theorem:condicions equivalents
a admissible} will be called {\em admissible} patterns.

\begin{remark} $ $
\begin{itemize}
\item Note that the definition of admissible pattern implies $a_1\geq 0$.

\item  All non-admissible patterns are equivalent.
\end{itemize}
\end{remark}

\section{Strongly admissible patterns}
\label{section:lock-relock}

Given a pattern $p=\sum_{i=1}^na_ix_i$, set
$$p'=
\casos{p-x_1}{if $a_1>1$,}{p(0,x_1,x_2,\dots,x_{n-1})}{otherwise,}$$ and define recursively
$p^{(0)}=p$ and $p^{(i)}=(p^{(i-1)})'$, for $i\in\N\setminus\{0\}$.

A pattern $p$ is said to be {\em strongly admissible} if it is admissible and $p'$ is admissible
as well. We will see that for a strongly admissible pattern $p$, the set $\SS(p)$ is infinite
and that it is possible to check computationally whether or not a numerical semigroup admits
$p$.

\begin{lemma}\label{strictly-admissible}
Let $p$ be a strongly admissible pattern of length $n$. Then for every $k_1\geq \cdots \geq
k_n$, it holds that $p(k_1,\ldots,k_n)\geq k_1\geq \cdots\geq k_n$.
\end{lemma}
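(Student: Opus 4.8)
The plan is to reduce the statement to the single inequality $p(k_1,\ldots,k_n)\geq k_1$, since the chain $k_1\geq\cdots\geq k_n$ is already part of the hypothesis and therefore needs no argument. Equivalently, I would show that $p(k_1,\ldots,k_n)-k_1\geq 0$. The whole point of the definition of $p'$ is that this difference is precisely $p'$ evaluated at a suitable decreasing sequence of non-negative integers; and strong admissibility guarantees that $p'$ is admissible, so that $\N$ admits $p'$ by Lemma~\ref{lemma:N admet tots els adimssibles}. Hence $p'$ takes non-negative values on every decreasing chain in $\N$, which is exactly what I will need.

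Since $p$ is admissible, its first partial sum $a_1$ is non-negative, and since $p$ is a pattern we have $a_1\neq 0$; thus $a_1\geq 1$, and the two branches $a_1>1$ and $a_1=1$ of the definition of $p'$ are exhaustive. I would treat them separately. If $a_1>1$, then $p'=p-x_1$, so directly $p(k_1,\ldots,k_n)-k_1=p'(k_1,\ldots,k_n)$, which is non-negative because $k_1\geq\cdots\geq k_n$ is a decreasing sequence in $\N$ and $\N$ admits $p'$. If $a_1=1$, then $p'=p(0,x_1,\ldots,x_{n-1})=a_2x_1+\cdots+a_nx_{n-1}$ has length $n-1$, and a direct computation gives $p(k_1,\ldots,k_n)-k_1=a_2k_2+\cdots+a_nk_n=p'(k_2,\ldots,k_n)$. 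Since $k_2\geq\cdots\geq k_n$ is a decreasing sequence of $n-1$ non-negative integers and $\N$ admits $p'$, this is again non-negative.

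I would then combine the two cases: in either situation $p(k_1,\ldots,k_n)-k_1\geq 0$, which together with the hypothesis $k_1\geq\cdots\geq k_n$ yields the full chain $p(k_1,\ldots,k_n)\geq k_1\geq\cdots\geq k_n$.

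There is no serious obstacle here; the argument is essentially bookkeeping built around the definition of $p'$. The only point requiring care is to verify, in the branch $a_1=1$, that the shift of arguments is correct — that subtracting $k_1$ really removes the leading term and leaves exactly $p'$ evaluated at $(k_2,\ldots,k_n)$ rather than at $(k_1,\ldots,k_{n-1})$ — and to observe that the truncated sequence is still a legitimate decreasing chain in $\N$, so that the admissibility of $p'$ can be invoked. As an alternative to citing that $\N$ admits $p'$, one could apply Lemma~\ref{dos} to $p'$, whose hypothesis is precisely the admissibility of $p'$, obtaining in each case a bound of the form $p'(\cdots)\geq S'\,k_n\geq 0$, where $S'$ is the (non-negative, by admissibility) sum of the coefficients of $p'$.
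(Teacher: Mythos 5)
Your proof is correct and follows essentially the same route as the paper's: both arguments write $p(k_1,\ldots,k_n)=k_1+p'(\cdots)$ according to the two branches of the definition of $p'$ (arguments $(k_1,\ldots,k_n)$ when $a_1>1$, arguments $(k_2,\ldots,k_n)$ when $a_1=1$) and invoke the admissibility of $p'$ to conclude that this correction term is non-negative on decreasing sequences in $\N$. Your added remarks---that $a_1\geq 1$ makes the two branches exhaustive, and that Lemma~\ref{dos} could replace the appeal to $\N$ admitting $p'$---are sound but do not change the argument.
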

\begin{proof}
Assume that $p=\sum_{i=1}^n a_ix_i$. Since $p'$ is admissible, we have that
\[p'(k_1,\ldots,k_n)\geq 0\] ($p'(k_2,\ldots,k_n)\geq 0$, if $a_1=1$), which
leads to \[p(k_1,\ldots,k_n)=k_1+p'(k_1,\ldots,k_n)\geq k_1\]
($p(k_1,\ldots,k_n)=k_1+p'(k_2,\ldots,k_n)\geq k_1$, if $a_1=1$).
\end{proof}

Observe that for a numerical semigroup $\Lambda$ with multiplicity
$m(\Lambda)$, the set $\Lambda\setminus\{\m(\Lambda)\}$ is also a numerical
semigroup. Next corollary shows that this semigroup admits all strongly
admissible patterns admitted by $\Lambda$.

\begin{corollary}\label{a0}
Let $p$ be a strongly admissible pattern and let $\Lambda$ be a numerical semigroup admitting
$p$. Then $\Lambda\setminus\{\m(\Lambda)\}$ also admits $p$.
\end{corollary}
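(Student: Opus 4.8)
The plan is to show that removing the multiplicity from a numerical semigroup $\Lambda$ admitting a strongly admissible pattern $p$ yields a semigroup that still admits $p$. Let $\Lambda' = \Lambda\setminus\{\m(\Lambda)\}$; this is known to be a numerical semigroup. I need to verify that for every decreasing sequence $s_1\geq s_2\geq\cdots\geq s_n$ of elements of $\Lambda'$, the value $p(s_1,\dots,s_n)$ lies in $\Lambda'$. Since every element of $\Lambda'$ is also in $\Lambda$, and $\Lambda$ admits $p$, we immediately get $p(s_1,\dots,s_n)\in\Lambda$. The only thing that can go wrong is that this value equals $\m(\Lambda)$, so the entire task reduces to ruling out $p(s_1,\dots,s_n)=\m(\Lambda)$.

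First I would invoke Lemma~\ref{strictly-admissible}, which gives $p(s_1,\dots,s_n)\geq s_1$. Here the $s_i$ are elements of $\Lambda'=\Lambda\setminus\{\m(\Lambda)\}$, so each $s_i$ is either $0$ or an element of $\Lambda$ strictly larger than $\m(\Lambda)$. I would split on whether $s_1=0$. If $s_1=0$ then all the $s_i$ vanish and $p(s_1,\dots,s_n)=0\in\Lambda'$. Otherwise $s_1>\m(\Lambda)$ because $s_1\in\Lambda'$ and $s_1\neq 0$ forces $s_1\geq\m(\Lambda)$ with equality excluded. Combining with Lemma~\ref{strictly-admissible},
\[
p(s_1,\dots,s_n)\geq s_1>\m(\Lambda),
\]
so in particular $p(s_1,\dots,s_n)\neq\m(\Lambda)$. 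Hence $p(s_1,\dots,s_n)\in\Lambda\setminus\{\m(\Lambda)\}=\Lambda'$, as required.

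The anticipated main obstacle is precisely the bookkeeping around the value $\m(\Lambda)$: one must be sure that the strict inequality $s_1>\m(\Lambda)$ genuinely holds whenever $s_1\neq 0$, which relies on $\m(\Lambda)$ being the \emph{smallest} nonzero element of $\Lambda$ so that no nonzero element of $\Lambda'$ can fall at or below it. The strict lower bound $p(s_1,\dots,s_n)\geq s_1$ coming from strong admissibility is exactly what upgrades the trivial membership $p(s_1,\dots,s_n)\in\Lambda$ to membership in $\Lambda'$; without strong admissibility (merely admissibility) this bound need not hold, which is why the hypothesis cannot be weakened. Everything else is a direct application of the definition of admitting a pattern together with Lemma~\ref{strictly-admissible}.
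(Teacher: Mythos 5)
Your proof is correct and follows essentially the same route as the paper's: both use the fact that every nonzero element of $\Lambda\setminus\{\m(\Lambda)\}$ exceeds $\m(\Lambda)$, split into the cases $s_1=0$ and $s_1>\m(\Lambda)$, and apply Lemma~\ref{strictly-admissible} to get $p(s_1,\dots,s_n)\geq s_1>\m(\Lambda)$, hence membership in $\Lambda\setminus\{\m(\Lambda)\}$. Your version merely makes explicit the step that $p(s_1,\dots,s_n)\in\Lambda$ (which the paper leaves implicit), so there is nothing substantive to add.
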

\begin{proof}
Assume that $p$ has length $n$ and let $s_1,\ldots,s_n$ be elements of $\Lambda$ such that
$s_1\geq \cdots\geq s_n$ with $\m(\Lambda)\not\in \{s_1,\ldots,s_n\}$. Then either $s_i=0$ for
all $i$ or $s_1>\m(\Lambda)$. Hence either $p(s_1,\ldots,s_n)=0$ or in view of Lemma
\ref{strictly-admissible}, $p(s_1,\ldots,s_n)\geq s_1>\m(\Lambda)$. In both cases
$p(s_1,\ldots,s_n)\in \Lambda\setminus\{\m(\Lambda)\}$. This proves that
$\Lambda\setminus\{\m(\Lambda)\}\in \SS(p)$.
\end{proof}

This proves that $\SS(p)$ has infinitely many elements if $p$ is a strongly
admissible pattern. We will see in Section~\ref{section:p-sistemas} which
elements we can remove from $\Lambda\in \SS(p)$ so that the resulting
numerical semigroup also admits $p$.


\begin{corollary}
Let $p$ be a strongly admissible pattern of length $n$. Then for any numerical semigroup
$\Lambda$ with conductor $c$, $\Lambda$ admits $p$ if and only if for every $s_1,\dots,
s_n\in\Lambda$ with $c>s_1\geq s_2\geq\dots\geq s_n$, the integer $p(s_1,\dots,s_n)$ belongs to
$\Lambda$.
\end{corollary}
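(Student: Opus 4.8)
The plan is to prove both implications of the ``if and only if.'' The forward direction is immediate: if $\Lambda$ admits $p$, then by definition $p(s_1,\dots,s_n)\in\Lambda$ for \emph{every} decreasing sequence in $\Lambda$, so in particular this holds for those sequences with $c>s_1$. The entire content of the statement is therefore the converse, and the strong admissibility of $p$ will be the crucial hypothesis there.

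For the converse, I would assume that $p(s_1,\dots,s_n)\in\Lambda$ holds for all decreasing sequences bounded by the conductor, i.e.\ with $c>s_1\geq\cdots\geq s_n$, and then show that an \emph{arbitrary} decreasing sequence $s_1\geq\cdots\geq s_n$ in $\Lambda$ also yields $p(s_1,\dots,s_n)\in\Lambda$. The only problematic case is when $s_1\geq c$, since otherwise we are already in the covered regime. So suppose $s_1\geq c$. The key observation will be that $p(s_1,\dots,s_n)$ is at least $s_1$, which follows from Lemma~\ref{strictly-admissible}: strong admissibility guarantees $p(s_1,\dots,s_n)\geq s_1$. Combined with $s_1\geq c$, this gives $p(s_1,\dots,s_n)\geq c$, and since every integer $\geq c$ lies in $\Lambda$ (by definition of the conductor), we conclude $p(s_1,\dots,s_n)\in\Lambda$ with no further work.

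This handles the case $s_1\geq c$ entirely. The remaining case $s_1<c$, i.e.\ $c>s_1\geq\cdots\geq s_n$, is exactly the hypothesis, so $p(s_1,\dots,s_n)\in\Lambda$ there as well. Having covered both $s_1\geq c$ and $s_1<c$, every decreasing sequence is accounted for, and thus $\Lambda$ admits $p$. The main (and really the only) obstacle is recognizing that Lemma~\ref{strictly-admissible} does the heavy lifting: strong admissibility is precisely what lets us bound $p(s_1,\dots,s_n)$ below by $s_1$ and thereby push large arguments into the cofinite tail $[c,\infty)\subseteq\Lambda$. Without that lower bound — i.e.\ for merely admissible patterns — the reduction to a finite check below the conductor would fail, which is why the hypothesis insists on strong admissibility.
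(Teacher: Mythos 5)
Your proposal is correct and follows exactly the paper's own argument: the forward direction is immediate, and for the converse you reduce to the case $s_1\geq c$, where Lemma~\ref{strictly-admissible} gives $p(s_1,\dots,s_n)\geq s_1\geq c$, hence membership in $\Lambda$ since the conductor's tail $[c,\infty)$ lies in $\Lambda$. Nothing is missing; this is the same proof.
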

\begin{proof} We need to prove that if $p(s_1,\dots,s_n)$ belongs to $\Lambda$ for all $c>s_1\geq
s_2\geq\dots\geq s_n$, then $p(s_1,\dots,s_n)$ belongs to $\Lambda$ for all $s_1\geq
s_2\geq\dots\geq s_n$ with $s_1\geq c$. Let $s_1\geq \cdots \geq s_n$  be elements in $\Lambda$
such that $s_1\geq c$. In view of Lemma \ref{strictly-admissible}, $p(s_1,\dots,s_n)\geq s_1\geq
c$ and thus $p(s_1,\ldots,s_n)\in \Lambda$.
\end{proof}

This result enables us to check computationally if a strongly admissible
pattern is admitted or not by a numerical semigroup. Observe that for an
admissible pattern $p$ that is not strongly admissible, the best lower bound
we have for $p(s_1,\ldots,s_n)$ is given in Lemma \ref{dos}, which
unfortunately cannot be used to effectively check whether or not a numerical
semigroup admits $p$.

\section{Closures}
\label{section:closures}

A {\em covering} of a numerical semigroup $\Lambda$ with respect to an admissible  pattern $p$
is a numerical semigroup containing $\Lambda$ and admitting $p$. A {\em closure} of a numerical
semigroup $\Lambda$ with respect to an admissible  pattern $p$ (or simply a $p$-closure of
$\Lambda$) is a covering of $\Lambda$ with respect to $p$ not containing properly any other
covering.

If $p$ is an admissible pattern, then $\SS(p)$ is not empty, since by
Theorem~\ref{theorem:condicions equivalents a admissible} $\N$ is in this set. Moreover, notice
that $\N$ is a covering of any numerical semigroup with respect to any admissible pattern.

Let $\Lambda$ be a numerical semigroup. As $\N\setminus \Lambda$ has finitely many elements, we
have that the set of $\{\Gamma \in \SS(p) ~|~ \Lambda\subseteq \Gamma\}$ is finite (and not
empty by the remark made above). Besides, one can easily proof the following result.

\begin{lemma}
Let $p$ be an admissible pattern and let $\Lambda_1,\ldots,\Lambda_n\in \SS(p)$. Then
$\Lambda_1\cap \cdots \cap \Lambda_n\in \SS(p)$.
\end{lemma}

Hence the $p$-closure of $\Lambda$ is $\bigcap_{\Gamma\in \SS(p), \Lambda\subseteq \Gamma}
\Gamma$. However, this construction cannot be (so far) easily performed, since we still do not
have a procedure to construct the set $\SS(p)$. In this section we show how a covering of a
numerical semigroup with respect to certain admissible patterns can be constructed
algorithmically.

\begin{lemma} \label{lemma:p(Lambda) conte 0 i es tancat respecte +} Given any numerical semigroup
$\Lambda$ and any admissible pattern $p=\sum_{i=1}^na_ix_i$ (admitted or not by $\Lambda$), the
set $\{p(s_1,\dots,s_n)\mid s_1\geq s_2\geq \dots \geq s_n\}$ contains $0$ and is closed under
addition.
\end{lemma}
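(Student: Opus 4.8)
The plan is to verify directly the two asserted properties. Write $p(\Lambda)=\{p(s_1,\dots,s_n)\mid s_1\geq\cdots\geq s_n,\ s_i\in\Lambda\}$ for the set in question, that is, the image of $p$ on the weakly decreasing tuples of elements of $\Lambda$. Both properties will follow from the linearity and homogeneity of $p$ together with the monoid structure of $\Lambda$, and no use of admissibility is required.

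First I would show $0\in p(\Lambda)$. Since $0\in\Lambda$, the constant tuple $(0,\dots,0)$ satisfies $0\geq 0\geq\cdots\geq 0$ with all entries in $\Lambda$, and homogeneity gives $p(0,\dots,0)=\sum_{i=1}^n a_i\cdot 0=0$. Hence $0\in p(\Lambda)$.

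Next I would prove closure under addition. Take two elements $p(s_1,\dots,s_n)$ and $p(t_1,\dots,t_n)$ of $p(\Lambda)$, so that $s_1\geq\cdots\geq s_n$ and $t_1\geq\cdots\geq t_n$ with all $s_i,t_i\in\Lambda$. Because $p$ is a linear homogeneous polynomial, \[p(s_1,\dots,s_n)+p(t_1,\dots,t_n)=p(s_1+t_1,\dots,s_n+t_n).\] It then suffices to check that $(s_1+t_1,\dots,s_n+t_n)$ is a valid argument for $p$: each $s_i+t_i$ lies in $\Lambda$ since $\Lambda$ is closed under addition, and $s_i+t_i\geq s_{i+1}+t_{i+1}$ for every $i$, by adding the inequalities $s_i\geq s_{i+1}$ and $t_i\geq t_{i+1}$ term by term. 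Thus the sum is again of the form $p(u_1,\dots,u_n)$ with $u_1\geq\cdots\geq u_n$ in $\Lambda$, and therefore belongs to $p(\Lambda)$.

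No step presents a genuine obstacle: the only point needing care is the preservation of the monotonicity chain under coordinatewise addition, which is immediate. It is worth noting that admissibility of $p$ is not actually used here; the conclusion holds for every linear homogeneous $p$, and the hypothesis is stated only because the lemma will be applied to admissible patterns in what follows.
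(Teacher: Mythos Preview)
Your proof is correct and follows exactly the approach the paper indicates: the paper's own proof is the single sentence ``This follows easily from the linearity of $p$,'' and you have simply unpacked that sentence carefully, including the observation that admissibility plays no role.
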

\begin{proof}
This follows easily from the linearity of $p$.
\end{proof}

Given a subset $A$ of $\N$ and an admissible pattern $p$, the set
$\{p(s_1,\dots,s_n)\mid s_1\geq s_2\geq \dots \geq s_n, s_1,\ldots, s_n\in
A\}$ will be denoted by $p(A)$, and the set
$$\underbrace{p(p(p\dots(p}_k(A))\dots))$$
will be denoted by $p^k(A)$.

\begin{remark}
Given a numerical semigroup $\Lambda$ and an admissible pattern $p$, the set $p(\Lambda)$ will
not be a numerical semigroup in general. For instance, if we take $p=2x_1$, then $\N\setminus
p(\Lambda)$ has infinitely many elements and thus $p(\Lambda)$ is not a numerical semigroup.
\end{remark}

\begin{remark} A numerical semigroup $\Lambda$ admits a pattern $p$ if and only if
$p(\Lambda)~\subseteq~\Lambda$.
\end{remark}

We say that a pattern $p=\sum_{i=1}^{n}a_ix_i$ is {\em premonic} if $\sum_{i=1}^{n'}a_i=1$ for
some $n'\leq n$. In particular, all monic patterns are premonic.

\begin{lemma} \label{lemma:si p es premonic alshores p(Lambda) conte Lambda} If $p$ is a premonic
pattern, then $p(\Lambda)$ contains $\Lambda$, for every numerical semigroup~$\Lambda$.
\end{lemma}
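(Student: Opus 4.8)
The plan is to show the reverse inclusion $\Lambda\subseteq p(\Lambda)$ directly by exhibiting, for each $\lambda\in\Lambda$, a non-increasing tuple of elements of $\Lambda$ on which $p$ evaluates to $\lambda$. The premonic hypothesis is exactly what makes such a tuple available: by definition there is some $n'\leq n$ with $\sum_{i=1}^{n'}a_i=1$, and this partial sum equal to $1$ is what will let us ``read off'' an arbitrary element of $\Lambda$ as a value of $p$.

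First I would fix an arbitrary $\lambda\in\Lambda$ and choose $n'\leq n$ with $\sum_{i=1}^{n'}a_i=1$. Then I would set
\[
s_1=s_2=\cdots=s_{n'}=\lambda,\qquad s_{n'+1}=\cdots=s_n=0.
\]
Two routine verifications are needed. First, this is a legitimate input for $p(\Lambda)$: every $s_i$ lies in $\Lambda$ because $\lambda\in\Lambda$ and $0\in\Lambda$ (numerical semigroups contain $0$), and the sequence is non-increasing because $\lambda\geq 0$, so that $s_1\geq\cdots\geq s_{n'}\geq s_{n'+1}\geq\cdots\geq s_n$. Second, evaluating gives
\[
p(s_1,\dots,s_n)=\Bigl(\sum_{i=1}^{n'}a_i\Bigr)\lambda+\Bigl(\sum_{i=n'+1}^{n}a_i\Bigr)\cdot 0=1\cdot\lambda=\lambda.
\]
Hence $\lambda\in p(\Lambda)$, and since $\lambda$ was arbitrary, $\Lambda\subseteq p(\Lambda)$.

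There is essentially no obstacle here; the content is entirely in the choice of the tuple, and the premonic condition does all the work by forcing the relevant coefficient sum to be exactly $1$. The only points demanding any care are the two checks above, namely that the chosen tuple is non-increasing and that the trailing zeros are admissible entries (both of which rely only on $0\in\Lambda$ and $\lambda\geq 0$). In particular the case $\lambda=0$ is covered automatically, since then all $s_i=0$ and $p(0,\dots,0)=0\in p(\Lambda)$.
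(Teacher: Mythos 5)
Your proof is correct and is essentially identical to the paper's own argument: both use the premonic partial sum $\sum_{i=1}^{n'}a_i=1$ and evaluate $p$ on the tuple $(\lambda,\dots,\lambda,0,\dots,0)$ with $n'$ copies of $\lambda$ to recover $\lambda\in p(\Lambda)$. The only difference is that you spell out the routine checks (non-increasing order, $0\in\Lambda$) that the paper leaves implicit.
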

\begin{proof}
Suppose that $n'\leq n$ is such that $\sum_{i=1}^{n'}a_i=1$. Let $\Lambda$ be
a numerical semigroup and let $l\in\Lambda$. Then
$$l=\sum_{i=1}^{n'}a_il=p(\overbrace{l,l,\dots,l}^{n'},\overbrace{0,\dots,0}^{n-n'})\in p(\Lambda).$$
\end{proof}

\begin{proposition}
\label{proposition:si p es premonic alshores p(Lambda) es semigrup} If $p$ is a premonic pattern
and if $\Lambda$ is a numerical semigroup, then $p(\Lambda)$ is a numerical semigroup.
\end{proposition}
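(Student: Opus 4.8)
The plan is to verify the two defining conditions of a numerical semigroup for the set $p(\Lambda)$: that it contains $0$, is closed under addition, and has finite complement in $\N$. By Lemma~\ref{lemma:p(Lambda) conte 0 i es tancat respecte +}, I already know that $p(\Lambda)$ contains $0$ and is closed under addition, so the first two properties come for free and no work is required there. The entire content of the proof therefore reduces to establishing that $\N\setminus p(\Lambda)$ is finite.

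To handle the finiteness of the complement, the key observation I would use is the premonic hypothesis together with Lemma~\ref{lemma:si p es premonic alshores p(Lambda) conte Lambda}. Since $p$ is premonic, that lemma tells us $\Lambda\subseteq p(\Lambda)$. But $\Lambda$ is itself a numerical semigroup, so $\N\setminus\Lambda$ is already finite. From the inclusion $\Lambda\subseteq p(\Lambda)$ it follows immediately that $\N\setminus p(\Lambda)\subseteq\N\setminus\Lambda$, and hence $\N\setminus p(\Lambda)$ is finite as well, being a subset of a finite set.

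Putting these pieces together finishes the argument. The set $p(\Lambda)$ contains $0$, is closed under addition (both by Lemma~\ref{lemma:p(Lambda) conte 0 i es tancat respecte +}), and has finite complement in $\N$ (by the inclusion $\Lambda\subseteq p(\Lambda)$ from Lemma~\ref{lemma:si p es premonic alshores p(Lambda) conte Lambda} combined with finiteness of $\N\setminus\Lambda$). These are exactly the three conditions in the definition of a numerical semigroup, so $p(\Lambda)$ is a numerical semigroup.

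I do not anticipate any real obstacle here: the proposition is essentially an immediate corollary of the two preceding lemmas, with the premonic hypothesis doing all the heavy lifting by guaranteeing that $p(\Lambda)$ sits between $\Lambda$ and $\N$. The only point demanding the slightest care is making sure the premonic condition is genuinely needed for the inclusion $\Lambda\subseteq p(\Lambda)$ — indeed the earlier remark about $p=2x_1$ shows that without premonicity the complement can be infinite — but since that inclusion is already packaged as Lemma~\ref{lemma:si p es premonic alshores p(Lambda) conte Lambda}, there is nothing further to prove.
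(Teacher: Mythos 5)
Your proof is correct and follows exactly the paper's own argument: Lemma~\ref{lemma:si p es premonic alshores p(Lambda) conte Lambda} gives $\Lambda\subseteq p(\Lambda)$ and hence finiteness of the complement, while Lemma~\ref{lemma:p(Lambda) conte 0 i es tancat respecte +} supplies the monoid structure. You have merely written out in detail the same two-lemma combination the authors use.
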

\begin{proof}
As a consequence of Lemma~\ref{lemma:si p es premonic alshores p(Lambda) conte Lambda} the
number of elements in $\N\setminus p(\Lambda)$ is finite. This, together with
Lemma~\ref{lemma:p(Lambda) conte 0 i es tancat respecte +}, proves that $p(\Lambda)$ is a
numerical semigroup.
\end{proof}

\begin{remark} $ $
\begin{itemize}
\item  A numerical semigroup $\Lambda$ admits a premonic pattern $p$ if and only if
the condition $p(\Lambda)=~\Lambda$ holds.

\item By Proposition~\ref{proposition:si p es premonic alshores p(Lambda) es semigrup}, if $p$
is a premonic pattern and if $\Lambda$ is a numerical semigroup, then the set $p^k(\Lambda)$ is
indeed a numerical semigroup containing~$\Lambda$.
\end{itemize}
\end{remark}

\begin{proposition}
\label{proposition:pk(Lambda)=p(k+1)(Lambda)} Given a numerical semigroup $\Lambda$ and an
admissible premonic pattern $p$, there exists an integer $k$ such that
$p^k(\Lambda)=p^{k+1}(\Lambda)$.
\end{proposition}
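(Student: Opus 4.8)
The plan is to exhibit the sequence $(p^k(\Lambda))_{k\geq 0}$ as an ascending chain of numerical semigroups sitting inside $\N$, and then to argue that such a chain must stabilize because the complements are finite.

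First I would record the monotonicity of the operator $A\mapsto p(A)$: if $A\subseteq B$ are subsets of $\N$, then directly from the definition of $p(A)$ every element $p(s_1,\dots,s_n)$ with $s_1\geq\cdots\geq s_n$ taken from $A$ is also such an element with the $s_i$ taken from $B$, so $p(A)\subseteq p(B)$. Next, combining this with Lemma~\ref{lemma:si p es premonic alshores p(Lambda) conte Lambda}, which gives $\Lambda\subseteq p(\Lambda)$ because $p$ is premonic, I would prove by induction that $p^k(\Lambda)\subseteq p^{k+1}(\Lambda)$ for all $k$: applying $p$ to the inclusion $p^{k-1}(\Lambda)\subseteq p^{k}(\Lambda)$ and using monotonicity yields $p^{k}(\Lambda)\subseteq p^{k+1}(\Lambda)$. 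This produces the ascending chain
$$\Lambda\subseteq p(\Lambda)\subseteq p^2(\Lambda)\subseteq\cdots\subseteq\N.$$
By Proposition~\ref{proposition:si p es premonic alshores p(Lambda) es semigrup} and the remark following it, each $p^k(\Lambda)$ is a numerical semigroup, so each complement $\N\setminus p^k(\Lambda)$ is finite.

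Finally I would pass to complements. Taking complements reverses inclusions, so the finite sets $\N\setminus p^k(\Lambda)$ form a descending chain contained in the fixed finite set $\N\setminus\Lambda$. Equivalently, the number of gaps $|\N\setminus p^k(\Lambda)|$ is a nonincreasing sequence of nonnegative integers, hence eventually constant, say from some index $k$ onward. For that $k$ the inclusion $p^{k}(\Lambda)\subseteq p^{k+1}(\Lambda)$ together with the equality of the cardinalities of their (finite) complements forces $p^{k}(\Lambda)=p^{k+1}(\Lambda)$, which is the claim.

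I do not anticipate a genuine obstacle here. The only points needing care are checking that $A\mapsto p(A)$ is monotone and that the inclusion $\Lambda\subseteq p(\Lambda)$ is preserved under iteration, both of which are immediate once the premonic hypothesis is in force. The substance of the argument is merely the observation that an ascending chain of numerical semigroups, being bounded above by $\N$ and consisting of sets with finite complement, cannot strictly ascend infinitely often.
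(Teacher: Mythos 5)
Your proof is correct and takes essentially the same route as the paper's: the paper's one-line argument likewise rests on the ascending chain $p^i(\Lambda)\subseteq p^{i+1}(\Lambda)$ together with the fact that only finitely many numerical semigroups contain $\Lambda$, which is exactly your gap-counting observation. You merely spell out the monotonicity and induction details and phrase the stabilization via cardinalities of the finite complements rather than via the finiteness of the set of oversemigroups of $\Lambda$.
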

\begin{proof} It follows from the inclusion $p^i(\Lambda)\subseteq p^{i+1}(\Lambda)$ and the fact
that there is only a finite number of numerical semigroups containing $\Lambda$.
\end{proof}

\begin{theorem}\label{clausura-premonicos}
Given a numerical semigroup $\Lambda$ and an admissible premonic pattern $p$, there exists a
unique closure of $\Lambda$ with respect to $p$. It is exactly $p^k(\Lambda)$ where $k$ is the
minimal integer such that $p^k(\Lambda)=p^{k+1}(\Lambda)$.
\end{theorem}
\begin{proof} We have to prove that $p^k(\Lambda)$ is a covering of $\Lambda$ with respect to $p$
and that any other covering of $\Lambda$ with respect to $p$ will contain $p^k(\Lambda)$. The
first part is a consequence of the choice of $k$. For the second part, notice that any covering
must contain $p^i(\Lambda)$ for all $i$. In particular, it must contain $p^k(\Lambda)$.
\end{proof}

\section{$p$-systems of generators and $\SS(p)$ in a directed acyclic graph}\label{section:p-sistemas}
In this section we exploit the concept of closure given in the preceding
section in order to introduce the concept of $p$-system of generators for an
admissible pattern $p$. This will enable us to construct recursively the set
$\SS(p)$ and arrange it in a directed acyclic graph.

The idea is the following. Let $\Lambda$ be a numerical semigroup. It is not
hard to prove that given $\lambda\in \Lambda$, the set $\Lambda \setminus
\{\lambda\}$ is a numerical semigroup if and only if $\lambda$ is in the
minimal system of generators of $\Lambda$. Besides, if $\Lambda$ is a
numerical semigroup not equal to $\N$, then so is $\Lambda\cup
\{\g(\Lambda)\}$ (the reader can check that $\Lambda \cup\{n\}$, with $n\in
\N\setminus\Lambda$, is a numerical semigroup if and only if $2n$, $3n$ and
$n+\lambda \in \Lambda$ for all $\lambda \in \Lambda$; see
\cite{oversemigroups}). Note also that if $\lambda$ is a minimal generator of
$\Lambda$ greater than $\g(\Lambda)$, then
$\g(\Lambda\setminus\{\lambda\})=\lambda$, and trivially $\g(\Lambda)$ is a
minimal generator of $\Lambda \cup\{\g(\Lambda)\}$. Thus the operations of
adding the Frobenius number and removing a minimal generator greater than the
Frobenius number are the reverse of one another.

Given a numerical semigroup $\Lambda$, for $n\in \N$, define recursively the semigroup
$\Lambda_n$ as:
\begin{itemize}
\item $\Lambda_0=\Lambda$, \item $\Lambda_{n+1}=\Lambda_n\cup \{\g(\Lambda_n)\}$, if
$\Lambda_n\not=\N$; $\Lambda_{n+1}=\N$, otherwise.
\end{itemize}
Clearly for every numerical semigroup there exists $k\in \N$ such that $\Lambda_k=\N$. Hence
every numerical semigroup can be constructed from $\N$ by removing minimal generators greater
than the Frobenius number of the current numerical semigroup in the chain.

We will do the same for any admissible pattern $p$. First, we need to
introduce the concept of a $p$-system of generators. We will see that minimal
$p$-systems of generators are unique and that $\SS(p)$ is closed under the
operations of adding the Frobenius number and removing $p$-generators greater
than the Frobenius number. This will allow us to construct recursively the set
of all elements of $\SS(p)$.

Let $\Lambda$ be a numerical semigroup and let $p$ be an admissible pattern. As defined above,
we can construct the $p$-closure of $\Lambda$ as the intersection of all numerical semigroups in
$\SS(p)$ containing $\Lambda$ (this intersection is finite, since $\N\setminus \Lambda$ has
finitely many elements). Hence for $\Lambda\in \SS(p)$, we say that $A$ is a $p$-system of
generators of $\Lambda$ if the $p$-closure of $\langle A\rangle$ is equal to $\Lambda$. We will
write $\Lambda=\langle A\rangle_p$, when $A$ is a $p$-system of generators of $\Lambda$.
Clearly, if $A=\{n_1,\ldots,n_r\}$ is a system of generators of $\Lambda$, then $A$ is also a
$p$-system of generators of $\Lambda$. As in \cite{RoGaGaBr}, we show that minimal (with respect
to set inclusion) $p$-systems of generators are unique. The procedure to follow is similar to
the one exposed in the above mentioned paper, and the keystone to generalize it is Lemma
\ref{strictly-admissible}.

As a consequence of Corollary \ref{a0} we obtain the following.
\begin{corollary}\label{a4}
Let $\Lambda$ be in $\SS(p)$, with $p$ a strongly admissible pattern, and let $A$ be a
$p$-system of generators of $\Lambda$. Then $\m(\Lambda)\in A$.
\end{corollary}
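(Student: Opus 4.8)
The plan is to argue by contradiction, using Corollary~\ref{a0} to produce a numerical semigroup in $\SS(p)$ that contains $\langle A\rangle$ but is strictly smaller than $\Lambda$ whenever $\m(\Lambda)$ has been left out of $A$. This immediately clashes with the fact that the $p$-closure of $\langle A\rangle$ is the \emph{smallest} such semigroup and equals $\Lambda$.

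First I would assume, aiming for a contradiction, that $\m(\Lambda)\notin A$. Since $A$ is a $p$-system of generators of $\Lambda$ we have $A\subseteq\Lambda$, and hence $A\subseteq\Lambda\setminus\{\m(\Lambda)\}$. Recall from the observation preceding Corollary~\ref{a0} that $\Lambda\setminus\{\m(\Lambda)\}$ is again a numerical semigroup; being closed under addition and containing $A$, it must contain the submonoid $\langle A\rangle$.

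Next I would invoke Corollary~\ref{a0}: as $\Lambda\in\SS(p)$ and $p$ is strongly admissible, the semigroup $\Lambda\setminus\{\m(\Lambda)\}$ also admits $p$, so it is a member of $\SS(p)$ containing $\langle A\rangle$. Because the $p$-closure of $\langle A\rangle$ is the intersection of all numerical semigroups in $\SS(p)$ containing $\langle A\rangle$, it is contained in $\Lambda\setminus\{\m(\Lambda)\}$. But $A$ being a $p$-system of generators of $\Lambda$ means exactly that this $p$-closure is $\Lambda$, whence $\Lambda\subseteq\Lambda\setminus\{\m(\Lambda)\}$, contradicting $\m(\Lambda)\in\Lambda$. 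Therefore $\m(\Lambda)\in A$.

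The only delicate point, and the step I expect to be the main obstacle, is the bookkeeping in the middle: one must verify both that dropping $\m(\Lambda)$ keeps $\langle A\rangle$ inside $\Lambda\setminus\{\m(\Lambda)\}$ and that Corollary~\ref{a0} genuinely places $\Lambda\setminus\{\m(\Lambda)\}$ in $\SS(p)$ as a covering of $\langle A\rangle$ smaller than $\Lambda$. Once these two facts are secured, the minimality built into the notion of $p$-closure forces the contradiction with essentially no further computation.
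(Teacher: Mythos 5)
Your proof is correct and is essentially the paper's own argument: the paper states this corollary as an immediate consequence of Corollary~\ref{a0}, and your contradiction argument (if $\m(\Lambda)\notin A$ then $\langle A\rangle\subseteq\Lambda\setminus\{\m(\Lambda)\}\in\SS(p)$, so the $p$-closure of $\langle A\rangle$ is contained in $\Lambda\setminus\{\m(\Lambda)\}\subsetneq\Lambda$) is exactly the intended deduction.
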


\begin{lemma}\label{a5}
Let $p$ be a strongly admissible pattern and let $\Lambda$ be a numerical semigroup. For
$A\subseteq \Lambda$ and $s\in \N$, define
\[A(s)=\{a\in A~|~ a\leq s\}.\]
If $s\in p^n(\langle A\rangle)$, then $s\in p^n(\langle A(s)\rangle)$.
\end{lemma}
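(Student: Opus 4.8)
The plan is to argue by induction on the exponent $n$, using the monotonicity property of strongly admissible patterns recorded in Lemma~\ref{strictly-admissible} to control the size of the intermediate elements. Two elementary observations will be used throughout. First, the operator $B\mapsto p(B)$ is order-preserving for inclusion: if $B\subseteq C$, then every decreasing tuple drawn from $B$ is also one drawn from $C$, so $p(B)\subseteq p(C)$, and iterating gives $p^n(B)\subseteq p^n(C)$ for all $n$. Second, whenever $t\leq s$ we have $A(t)\subseteq A(s)$, and hence $\langle A(t)\rangle\subseteq\langle A(s)\rangle$.

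For the base case $n=0$ one has $p^0(\langle A\rangle)=\langle A\rangle$. If $s\in\langle A\rangle$, write $s=\sum_i c_i a_i$ with $c_i\in\N$ and $a_i\in A$. Since every summand is non-negative, each generator $a_i$ occurring with $c_i\geq 1$ satisfies $a_i\leq s$, so it lies in $A(s)$; therefore $s\in\langle A(s)\rangle=p^0(\langle A(s)\rangle)$, as required.

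For the inductive step, suppose the statement holds for $n$ and take $s\in p^{n+1}(\langle A\rangle)=p\bigl(p^n(\langle A\rangle)\bigr)$. Then $s=p(t_1,\dots,t_\ell)$ for some $t_1\geq\cdots\geq t_\ell$ lying in $p^n(\langle A\rangle)$, where $\ell$ denotes the length of $p$. The crucial point is that Lemma~\ref{strictly-admissible} yields $s=p(t_1,\dots,t_\ell)\geq t_1\geq\cdots\geq t_\ell$, so every $t_j$ satisfies $t_j\leq s$. Applying the induction hypothesis to each $t_j$ gives $t_j\in p^n(\langle A(t_j)\rangle)$, and the two monotonicity observations then give $t_j\in p^n(\langle A(t_j)\rangle)\subseteq p^n(\langle A(s)\rangle)$. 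As the $t_j$ remain in decreasing order, we conclude $s=p(t_1,\dots,t_\ell)\in p\bigl(p^n(\langle A(s)\rangle)\bigr)=p^{n+1}(\langle A(s)\rangle)$, which completes the induction.

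I expect the only substantive point to be the use of Lemma~\ref{strictly-admissible} to force $t_j\leq s$: this is precisely what forbids the reconstruction of $s$ from elements (and ultimately generators) larger than $s$, and it is the step where strong admissibility, rather than mere admissibility, is indispensable. Everything else is bookkeeping with the definition of $p(B)$ and the monotonicity of the iterated operators.
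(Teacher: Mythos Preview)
Your proof is correct and follows essentially the same route as the paper's: induction on $n$, with Lemma~\ref{strictly-admissible} supplying the key inequality $s\geq t_1\geq\cdots\geq t_\ell$ in the inductive step, and the monotonicity $A(t_j)\subseteq A(s)$ then pushing everything into $p^n(\langle A(s)\rangle)$. Your base case is spelled out more explicitly than the paper's ``trivially'', but the argument is the same.
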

\begin{proof}
We proceed by induction on $n$. For $n=0$ the result follows trivially. Assume that the
statement holds for $n$ and let us prove it for $n+1$. Let $s\in p^{n+1}(\langle A\rangle)$.
Then there exist $s_1,\ldots, s_k\in p^n(\langle A\rangle)$ such that $s=p(s_1,\ldots,s_k)$. By
induction hypothesis, for every $i\in \{1,\ldots,k\}$, $s_i\in p^n(\langle A(s_i)\rangle)$. From
Lemma \ref{strictly-admissible}, we deduce that $s\geq s_1\geq \cdots\geq s_k$. Hence
$A(s_k)\subseteq \cdots \subseteq A(s_1)\subseteq A(s)$ and thus $s_1,\ldots,s_k\in p^n(\langle
A(s)\rangle)$. We conclude that $s\in p^{n+1}(\langle A(s)\rangle)$.
\end{proof}

Theorem \ref{clausura-premonicos} and Lemma \ref{a5} allow us to generalize
the proof of \cite[Theorem 6]{RoGaGaBr} to any strongly admissible premonic
pattern.

\begin{theorem}\label{a6}
Let $p$ be a strongly admissible premonic pattern and let $\Lambda\in \SS(p)$. Then $\Lambda$
has a unique minimal $p$-system of generators.
\end{theorem}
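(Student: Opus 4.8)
The plan is to establish uniqueness of the minimal $p$-system of generators by showing that the intersection of two arbitrary $p$-systems of generators is again a $p$-system of generators. Once we know this, a standard argument yields a unique \emph{minimal} element: if $A$ and $B$ were both minimal, then $A \cap B$ would also be a $p$-system of generators contained in both, and minimality would force $A = A \cap B = B$. So the crux is the intersection claim, and the whole proof reduces to proving that if $\langle A\rangle_p = \Lambda = \langle B\rangle_p$, then $\langle A \cap B\rangle_p = \Lambda$.

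To prove the intersection claim, I would argue that it suffices to show $\Lambda \subseteq \langle A \cap B\rangle_p$ (the reverse inclusion is automatic since $A \cap B \subseteq \Lambda$ and $\Lambda \in \SS(p)$). By Theorem~\ref{clausura-premonicos}, the $p$-closure of $\langle A \cap B\rangle$ equals $p^k(\langle A \cap B\rangle)$ for suitable $k$, so it is enough to show that every $s \in \Lambda$ lies in some $p^n(\langle A \cap B\rangle)$. I would fix $s \in \Lambda$ and induct on $s$ (or on the number of gaps below $s$). Since $s \in \Lambda = p^{n_A}(\langle A\rangle)$ for some $n_A$, Lemma~\ref{a5} gives $s \in p^{n_A}(\langle A(s)\rangle)$, where $A(s) = \{a \in A \mid a \le s\}$; similarly $s \in p^{n_B}(\langle B(s)\rangle)$. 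The key point enabled by Lemma~\ref{a5} is that only generators \emph{not exceeding} $s$ are needed to produce $s$.

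The main obstacle — and the heart of the argument — is to pass from ``$s$ is generated using $A(s)$'' and ``$s$ is generated using $B(s)$'' to ``$s$ is generated using $A(s) \cap B(s) \subseteq A \cap B$.'' Here I would use the induction hypothesis: every element of $\Lambda$ strictly smaller than $s$ already lies in $\langle A \cap B\rangle_p$. If $a \in A(s)$ with $a < s$, then $a \in \Lambda$ and $a < s$, so by induction $a \in p^{m}(\langle A \cap B\rangle)$ for some $m$; the only problematic case is $a = s$ itself, i.e.\ $s \in A$. Symmetrically, $s$ could be forced to lie in $B$. The delicate step is showing that whenever $s$ genuinely requires itself as a generator from \emph{both} $A$ and $B$ (equivalently, $s$ is a minimal $p$-generator relative to $\Lambda$), one can conclude $s \in A \cap B$, whereas if $s$ is $p$-generated by strictly smaller elements of $\Lambda$ on at least one side, the induction closes the gap. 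I expect this case analysis, rather than any computation, to be where the real work lies.

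To organize it cleanly, I would first characterize the minimal $p$-generators as exactly those $s \in \Lambda$ with $s \notin \langle \Lambda \setminus \{s\}\rangle_p$ — informally, the elements that cannot be recovered by the pattern from strictly smaller semigroup elements — and show this set is contained in every $p$-system of generators. Combined with the intersection-stability argument above, this directly produces the unique minimal $p$-system of generators as the set of all minimal $p$-generators, completing the proof along the lines of \cite[Theorem~6]{RoGaGaBr}.
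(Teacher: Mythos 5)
Your argument is correct, but its architecture is genuinely different from the paper's. The paper proves uniqueness by a first-discrepancy contradiction: write two supposedly distinct minimal $p$-systems as $A=\{a_1<a_2<\cdots\}$ and $B=\{b_1<b_2<\cdots\}$, let $i$ be the first index with $a_i\neq b_i$, say $a_i<b_i$; since $a_i\in\Lambda=p^k(\langle B\rangle)$ (Theorem~\ref{clausura-premonicos}), Lemma~\ref{a5} gives $a_i\in p^k(\langle b_1,\ldots,b_{i-1}\rangle)=p^k(\langle a_1,\ldots,a_{i-1}\rangle)$, so $a_i$ is redundant in $A$, contradicting minimality. You instead prove the stronger structural fact that the intersection of two $p$-systems of generators of $\Lambda$ is again a $p$-system of generators, by induction along the elements of $\Lambda$, and then get uniqueness by the standard inclusion argument. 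Both proofs pivot on the same keystone, Lemma~\ref{a5}, together with the description of the closure via iterates of $p$; the paper's proof is shorter, while yours buys more: the same induction works verbatim for an arbitrary family of $p$-systems, so it also yields existence of the minimal $p$-system (as the intersection of all $p$-systems) and the characterization of its elements as those $s\in\Lambda$ with $\Lambda\setminus\{s\}\in\SS(p)$, which the paper obtains separately and later (Lemma~\ref{a8}).

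Two points in your write-up deserve firming up, and both are harmless. First, the step you flag as delicate is in fact immediate: by Lemma~\ref{a5}, if $s\notin A$ then $s\in p^{n_A}(\langle A(s)\rangle)$ where now $A(s)=\{a\in A\mid a<s\}$ consists of strictly smaller elements of $\Lambda$; so ``$s$ requires itself on the $A$-side'' already forces $s\in A$. Hence the three cases $s\notin A$, $s\notin B$, $s\in A\cap B$ are exhaustive, the first two close by the induction hypothesis (the elements of $A(s)$ land in iterates $p^m(\langle A\cap B\rangle)$, which are submonoids of $\N$ by the linearity argument of Lemma~\ref{lemma:p(Lambda) conte 0 i es tancat respecte +} and increase with $m$ since $p$ is premonic), and the third is trivial. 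Second, you apply Theorem~\ref{clausura-premonicos} to $\langle A\cap B\rangle$, but that theorem is stated for numerical semigroups, and a priori you only know $\m(\Lambda)\in A\cap B$ (Corollary~\ref{a4}), not that $\gcd(A\cap B)=1$. This is easily sidestepped: run the induction purely with the iterates $p^n(\langle A\cap B\rangle)$ and observe that any $\Gamma\in\SS(p)$ containing $A\cap B$ contains every $p^n(\langle A\cap B\rangle)$, by monotonicity of $X\mapsto p(X)$ and $p(\Gamma)\subseteq\Gamma$; then $\Lambda\subseteq\bigcup_n p^n(\langle A\cap B\rangle)$ gives $\Lambda\subseteq\langle A\cap B\rangle_p$ directly from the definition of the closure as an intersection, with no need to know beforehand that $\langle A\cap B\rangle$ is a numerical semigroup.
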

\begin{proof}
Assume that $A=\{a_1<a_2<\cdots\}$ and $B=\{b_1<b_2<\cdots\}$ are minimal
$p$-systems of generators, and that $A\not=B$. Let $i=\min\{k~|~
a_k\not=b_k\}$ and suppose without loss of generality that $a_i<b_i$ (this
minimum exists, because $A\not=B$). In view of
Theorem~\ref{clausura-premonicos}, there exists a positive integer $k$ such
that $\Lambda=p^k(\langle A\rangle)=p^k(\langle B\rangle)$. Since $a_i\in
\Lambda=p^k(\langle B\rangle)$, by Lemma \ref{a5}, $a_i\in p^k(\langle
b_1,\ldots,b_{i-1}\rangle)$. However,
$\{b_1,\ldots,b_{i-1}\}=\{a_1,\ldots,a_{i-1}\}$, which implies that $a_i\in
p^k(\langle a_1,\ldots,a_{i-1}\rangle)$, contradicting that $A$ was a minimal
$p$-system of generators of $\Lambda$.
\end{proof}

\begin{example}\label{varios-p}
Let $p$ be a strongly admissible premonic pattern. As we pointed out above, if
$\{n_1,\ldots,n_p\}$ is a minimal system of generators of $\Lambda\in \SS(p)$,
then it is also a $p$-system of generators of $\Lambda$. Thus the cardinality
of a minimal $p$-system of generators is smaller than or equal to that of a
minimal system of generators.
\[\langle 7,15\rangle_{x_1+x_2+x_3-x_4}=\langle
7,15\rangle_{x_1+2x_2-x_3}=\langle 7,15,31,47,48\rangle\]  and  \[\langle
7,15\rangle_{x_1+3x_2-x_3}=\langle 7, 15, 46, 69\rangle.\]

Next example shows that $x_1+x_2+x_3+x_4-x_5$ and $x_1+3x_2-x_3$ are not equivalent.
\[\langle 10,21,23\rangle_{x_1+x_2+x_3+x_4-x_5}=\langle
10,21,23\rangle_{x_1+x_2+2x_3-x_4}= \langle 10,21,23,68\rangle\] and
\[\langle 10,21,23\rangle_{x_1+3x_2-x_3}=\langle 10,21,23,78\rangle \subsetneq \langle
10,21,23,68\rangle.\] \qed
\end{example}

Next we show a procedure to construct the set of all the elements in $\SS(p)$ which is analogous
to the one presented in \cite{RoGaGaBr} for Arf semigroups.

\begin{lemma}\label{a7}
Let $\Lambda$ be in $\SS(p)\setminus\{\N\}$ with $p$ a strongly admissible pattern. Then
$\Lambda\cup \{\g(\Lambda)\}\in \SS(p)$.
\end{lemma}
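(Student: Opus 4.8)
The plan is to exploit the defining property of strongly admissible patterns recorded in Lemma~\ref{strictly-admissible}, namely that $p(s_1,\dots,s_n)\geq s_1$ whenever $s_1\geq\dots\geq s_n$. First I would set $\Lambda'=\Lambda\cup\{\g(\Lambda)\}$ and recall from the discussion preceding the statement (see also \cite{oversemigroups}) that, since $\Lambda\neq\N$, the set $\Lambda'$ is indeed a numerical semigroup; so it only remains to check that $\Lambda'$ admits $p$. The crucial elementary observation is that $\g(\Lambda)=\max(\N\setminus\Lambda)$, so every integer strictly greater than $\g(\Lambda)$ already lies in $\Lambda$, and therefore every integer greater than or equal to $\g(\Lambda)$ lies in $\Lambda'$.

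With this in hand I would take an arbitrary decreasing sequence $s_1\geq s_2\geq\dots\geq s_n$ of elements of $\Lambda'$ and show $p(s_1,\dots,s_n)\in\Lambda'$, splitting into two cases according to the size of $s_1$. If $s_1\geq\g(\Lambda)$, then Lemma~\ref{strictly-admissible} gives $p(s_1,\dots,s_n)\geq s_1\geq\g(\Lambda)$, and by the observation above this value lies in $\Lambda'$. If instead $s_1<\g(\Lambda)$, then every $s_i\leq s_1<\g(\Lambda)$, so no $s_i$ equals the newly added element $\g(\Lambda)$; hence all the $s_i$ belong to $\Lambda$, and since $\Lambda$ admits $p$ we get $p(s_1,\dots,s_n)\in\Lambda\subseteq\Lambda'$. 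These two cases are exhaustive, which finishes the argument.

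The only real subtlety, and the step doing all the work, is the first case: a priori $\g(\Lambda)$ may occur among the $s_i$ (possibly several times), so one cannot simply invoke that $\Lambda$ admits $p$. It is precisely the lower bound $p(s_1,\dots,s_n)\geq s_1$ coming from strong admissibility that saves the day, pushing the output of $p$ up to or beyond $\g(\Lambda)$ and hence into the cofinite region where $\Lambda'$ and $\Lambda$ agree with $\N$. I expect no genuine obstacle beyond correctly organizing this case split; the computation is trivial once the bound of Lemma~\ref{strictly-admissible} is brought to bear.
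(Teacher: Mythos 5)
Your proposal is correct and follows essentially the same argument as the paper: the identical case split on whether $s_1\geq\g(\Lambda)$ or $s_1<\g(\Lambda)$, with Lemma~\ref{strictly-admissible} providing the bound $p(s_1,\dots,s_n)\geq s_1$ in the first case and the hypothesis $\Lambda\in\SS(p)$ handling the second. Your explicit remark that $\Lambda\cup\{\g(\Lambda)\}$ is a numerical semigroup (which the paper covers in the discussion preceding the lemma) is a minor, welcome addition but not a different approach.
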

\begin{proof}
Assume that $p$ has length $n$ and let $s_1,\ldots,s_n$ be elements in
$\Lambda\cup\{\g(\Lambda)\}$ such that $s_1\geq \cdots \geq s_n$. We wonder if
$p(s_1,\ldots,s_n)\in \Lambda\cup\{\g(\Lambda)\}$. We distinguish two cases.
\begin{itemize}
\item If $\g(\Lambda)>s_1$, then $\{s_1,\ldots,s_n\}\subseteq \Lambda$. As $\Lambda\in \SS(p)$,
it follows that $p(s_1,\ldots,s_n)\in \Lambda\subset \Lambda\cup\{\g(\Lambda)\}$.

\item If $\g(\Lambda)\leq s_1$, then by Lemma \ref{strictly-admissible}, $p(s_1,\ldots,s_n)\geq
s_1\geq \g(\Lambda)$ and thus $p(s_1,\ldots,s_n)\in \Lambda\cup \{\g(\Lambda)\}$.
\end{itemize}
\end{proof}

Given a numerical semigroup $\Lambda$, recall that we defined a chain
\[\Lambda=\Lambda_0\subseteq \Lambda_1\subseteq \cdots \subseteq \Lambda_k=\N.\] Note that if
$\Lambda\in \SS(p)$ with $p$ a strongly admissible pattern, then by Lemma
\ref{a7}, the chain $\Lambda=\Lambda_0\subseteq \Lambda_1\subseteq \cdots
\subseteq \Lambda_k=\N$ is a chain of numerical semigroups admitting $p$, and
$\Lambda_i=\Lambda_{i+1}\setminus\{a\}$ for some $a\in \Lambda_{i+1}$. The
following result studies a condition that we must impose on an element $a$ in
a numerical semigroup $\Lambda\in \SS(p)$ for $\Lambda\setminus\{a\}$ to be
again in $\SS(p)$. The proof of this result is analogous to that of of
\cite[Lemma 8]{RoGaGaBr}. We include it here for sake of completeness.

\begin{lemma}\label{a8}
Let $p$ be a strongly admissible premonic pattern, let $\Lambda\in \SS(p)$ and
let $a\in \Lambda$. The following conditions are equivalent:
\begin{itemize}
\item[(1)] $a$ belongs to the minimal $p$-system of generators of $\Lambda$,

\item[(2)] $\Lambda\setminus\{a\}\in \SS(p)$.
\end{itemize}
\end{lemma}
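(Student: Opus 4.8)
The plan is to prove both implications, exploiting the uniqueness of the minimal $p$-system of generators (Theorem~\ref{a6}) and the description of the $p$-closure as $p^k(\langle\,\cdot\,\rangle)$ (Theorem~\ref{clausura-premonicos}). Write $A$ for the unique minimal $p$-system of generators of $\Lambda$, and fix an integer $k$ large enough that simultaneously $p^k(\langle A\rangle)=\Lambda$ and $p^k(\langle A\setminus\{a\}\rangle)$ equals the $p$-closure of $\langle A\setminus\{a\}\rangle$ (possible since each chain $p^i(\langle\cdot\rangle)$ stabilizes and is increasing, so any sufficiently large index serves both). The implication $(2)\Rightarrow(1)$ is the short one: if $a$ were not in $A$, then $A\subseteq\Lambda\setminus\{a\}$, so $\Lambda\setminus\{a\}$ would be an element of $\SS(p)$ containing $\langle A\rangle$; since the $p$-closure of $\langle A\rangle$ is the smallest such semigroup and equals $\Lambda$, this forces $\Lambda\subseteq\Lambda\setminus\{a\}$, contradicting $a\in\Lambda$. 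Hence $a\in A$.

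For $(1)\Rightarrow(2)$ the keystone observation I would record first is that $a\in A$ implies $a\notin p^k(\langle A\setminus\{a\}\rangle)$: otherwise the $p$-closure of $\langle A\setminus\{a\}\rangle$, being a semigroup in $\SS(p)$ that now contains all of $A$, would contain the $p$-closure $\Lambda$ of $\langle A\rangle$ and hence equal $\Lambda$, making $A\setminus\{a\}$ a strictly smaller $p$-system of generators and contradicting minimality of $A$. With this in hand I would first check that $\Lambda\setminus\{a\}$ is a numerical semigroup, that is, that $a$ is an ordinary minimal generator of $\Lambda$. If instead $a=b+c$ with $b,c\in\Lambda$ and $0<b,c<a$, then Lemma~\ref{a5} applied to $b,c\in\Lambda=p^k(\langle A\rangle)$ gives $b\in p^k(\langle A(b)\rangle)$ and $c\in p^k(\langle A(c)\rangle)$; since $b,c<a$ we have $A(b),A(c)\subseteq A\setminus\{a\}$, so $b,c\in p^k(\langle A\setminus\{a\}\rangle)$, and closure under addition yields $a=b+c\in p^k(\langle A\setminus\{a\}\rangle)$, contradicting the keystone observation.

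It then remains to show $\Lambda\setminus\{a\}$ admits $p$. Taking $s_1\geq\cdots\geq s_n$ in $\Lambda\setminus\{a\}$, we already have $p(s_1,\dots,s_n)\in\Lambda$ because $\Lambda\in\SS(p)$, so it suffices to rule out $p(s_1,\dots,s_n)=a$. By Lemma~\ref{strictly-admissible}, $a=p(s_1,\dots,s_n)\geq s_1$, and since $s_1\neq a$ this yields $a>s_1\geq\cdots\geq s_n$. Lemma~\ref{a5} then gives $s_i\in p^k(\langle A(s_i)\rangle)\subseteq p^k(\langle A\setminus\{a\}\rangle)$ for every $i$, whence $a=p(s_1,\dots,s_n)\in p^{k+1}(\langle A\setminus\{a\}\rangle)=p^k(\langle A\setminus\{a\}\rangle)$ by the stabilization built into the choice of $k$, again contradicting the keystone observation. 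Thus $p(s_1,\dots,s_n)\neq a$ and $\Lambda\setminus\{a\}\in\SS(p)$.

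The main obstacle I anticipate is the bookkeeping of the index $k$: one must guarantee a single $k$ works for $\langle A\rangle$ and for $\langle A\setminus\{a\}\rangle$ and survives one extra application of $p$ (the step $p^{k+1}=p^k$), which is precisely where strong admissibility and premonicity enter, through Theorem~\ref{clausura-premonicos} and Proposition~\ref{proposition:pk(Lambda)=p(k+1)(Lambda)}. The other delicate point is the repeated use of Lemma~\ref{strictly-admissible} to upgrade $a\geq s_1$ to the strict inequality $a>s_1$, so that every $s_i$ lies strictly below $a$ and Lemma~\ref{a5} confines them to $\langle A\setminus\{a\}\rangle$.
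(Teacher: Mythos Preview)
Your proof of $(2)\Rightarrow(1)$ matches the paper's. For $(1)\Rightarrow(2)$, your argument is correct but takes a considerably longer route. The paper simply observes that
\[
\Lambda\setminus\{a\}\ \subseteq\ \langle\Lambda\setminus\{a\}\rangle_p\ \subseteq\ \Lambda
\]
(the second inclusion because $\Lambda\in\SS(p)$ contains $\Lambda\setminus\{a\}$), so $\langle\Lambda\setminus\{a\}\rangle_p$ is either $\Lambda$ or $\Lambda\setminus\{a\}$; the former is ruled out by Theorem~\ref{a6}, since $\Lambda\setminus\{a\}$ would then be a $p$-system of generators omitting $a\in A$. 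Hence $\Lambda\setminus\{a\}=\langle\Lambda\setminus\{a\}\rangle_p\in\SS(p)$, in one stroke establishing both that $\Lambda\setminus\{a\}$ is a numerical semigroup and that it admits $p$. Your approach instead verifies these two facts separately via explicit computations with Lemmas~\ref{strictly-admissible} and~\ref{a5}; this is more laborious, but it does make transparent precisely where strong admissibility enters.

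One small caveat: you repeatedly invoke ``the $p$-closure of $\langle A\setminus\{a\}\rangle$'' and appeal to Theorem~\ref{clausura-premonicos}, but $\langle A\setminus\{a\}\rangle$ need not be a numerical semigroup (for instance if $a=\m(\Lambda)$ and its removal raises the $\gcd$ above~$1$), so that theorem does not literally apply. The fix is easy: the chain $p^i(\langle A\setminus\{a\}\rangle)$ is still increasing (premonicity) and bounded above by $\Lambda$, hence stabilizes, and your keystone observation only needs the stable value to be closed under $p$ and under addition, not to be a numerical semigroup. The paper sidesteps this entirely by working with $\langle\Lambda\setminus\{a\}\rangle$, which is always a numerical semigroup.
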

\begin{proof}
Let $A\subseteq \Lambda$ be the minimal $p$-system of generators of $\Lambda$.

Assume that $a\in A$. Then
\[ \Lambda\setminus\{a\}\subseteq \langle \Lambda\setminus\{a\}\rangle_p
    \subseteq \Lambda.\]
>From the uniqueness of $A$ (Theorem \ref{a6}), $\Lambda \not=\langle
\Lambda\setminus\{a\}\rangle_p$. Hence $\langle
\Lambda\setminus\{a\}\rangle_p=\Lambda \setminus \{a\}$. Thus $\Lambda
\setminus\{a\}\in \SS(p)$.

Now assume that $\Lambda\setminus\{a\}\in \SS(p)$. If $a\not\in A$, then
$A\subseteq \Lambda\setminus\{a\}$, which is in $\SS(p)$. Thus $\langle
A\rangle_p\subseteq \Lambda\setminus\{a\}$, contradicting that $\langle
A\rangle_p=\Lambda$.
\end{proof}

The following result (similar to \cite[Proposition 9]{RoGaGaBr} for Arf
numerical semigroups) now can be easily deduced from the observations made so
far and characterizes the leaves in the directed acyclic graph of numerical
semigroups admitting a certain pattern.

\begin{proposition}\label{a9}
Let $p$ be a strongly admissible premonic pattern, and let $\Lambda\in
\SS(p)$. The following conditions are equivalent.
\begin{itemize}
\item[(1)] $\Lambda=\overline{\Lambda}\cup \{\g(\overline{\Lambda})\}$, with
$\overline{\Lambda}\in \SS(p)$.

\item[(2)] The minimal $p$-system of generators of $\Lambda$ contains at least one element
greater than $\g(\Lambda)$.
\end{itemize}
\end{proposition}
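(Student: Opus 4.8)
The plan is to prove the two implications separately, using Lemma \ref{a8} as the bridge between membership in the minimal $p$-system of generators and the operation of deleting a single element, together with the elementary Frobenius-number bookkeeping recalled at the beginning of this section: if $a$ is a minimal generator of a numerical semigroup $\Lambda$ with $a>\g(\Lambda)$, then $\g(\Lambda\setminus\{a\})=a$, and conversely $\g(\Lambda)$ is a minimal generator of $\Lambda\cup\{\g(\Lambda)\}$ whenever $\Lambda\neq\N$.

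For $(1)\Rightarrow(2)$ I would assume $\Lambda=\overline{\Lambda}\cup\{\g(\overline{\Lambda})\}$ with $\overline{\Lambda}\in\SS(p)$ and set $f=\g(\overline{\Lambda})$. Since $f\notin\overline{\Lambda}$, we have $\Lambda\setminus\{f\}=\overline{\Lambda}\in\SS(p)$, so Lemma \ref{a8} yields that $f$ lies in the minimal $p$-system of generators of $\Lambda$. It then remains to check $f>\g(\Lambda)$: the gaps of $\Lambda$ are exactly those of $\overline{\Lambda}$ with $f$ deleted, and since $f$ is the largest gap of $\overline{\Lambda}$, the maximal remaining gap satisfies $\g(\Lambda)<f$. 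Hence $f$ is an element of the minimal $p$-system of generators of $\Lambda$ exceeding $\g(\Lambda)$, which is $(2)$.

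For $(2)\Rightarrow(1)$ I would take an element $a$ of the minimal $p$-system of generators of $\Lambda$ with $a>\g(\Lambda)$ and set $\overline{\Lambda}=\Lambda\setminus\{a\}$. By Lemma \ref{a8}, $\overline{\Lambda}\in\SS(p)$; in particular $\overline{\Lambda}$ is a numerical semigroup, so $a$ is even an ordinary minimal generator of $\Lambda$. The gaps of $\overline{\Lambda}$ are those of $\Lambda$ together with $a$, and $a>\g(\Lambda)$ forces $\g(\overline{\Lambda})=a$. Therefore $\overline{\Lambda}\cup\{\g(\overline{\Lambda})\}=(\Lambda\setminus\{a\})\cup\{a\}=\Lambda$ with $\overline{\Lambda}\in\SS(p)$, which is $(1)$.

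The calculations are routine, so the only point that will require care is the interaction between the two notions of generator present in the statement: condition $(2)$ refers to the minimal $p$-system of generators, whereas the deletion and Frobenius arguments speak of ordinary minimal generators. The reconciliation is exactly Lemma \ref{a8}, which guarantees that deleting a $p$-generator from a semigroup in $\SS(p)$ again lands in $\SS(p)$, hence yields a genuine numerical semigroup; this makes the deleted element automatically an ordinary minimal generator and lets the Frobenius bookkeeping apply verbatim. The degenerate case $\Lambda=\N$ can be disposed of directly by taking $\overline{\Lambda}=\N\setminus\{1\}$, which lies in $\SS(p)$ by Lemma \ref{a8} since $\m(\N)=1$ belongs to every minimal $p$-system of generators by Corollary \ref{a4}.
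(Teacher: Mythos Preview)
Your proof is correct and is exactly the argument the paper has in mind: the authors omit the proof entirely, stating only that the result ``can be easily deduced from the observations made so far,'' and your write-up spells out precisely those observations---Lemma~\ref{a8} together with the Frobenius-number bookkeeping recalled at the start of the section. Nothing to add.
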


\begin{example}
We ``draw'' the set $\SS(p)$ for $p=x_1+x_2+x_3-x_4$. Its associated directed
acyclic graph is given in the figure.

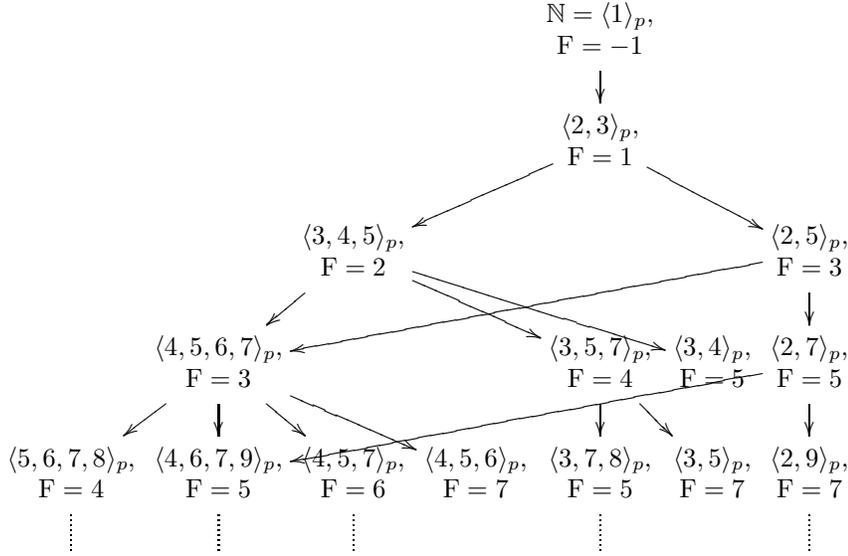
\begin{figure}[h]
\label{f1} \caption{$\SS(p)$, $p=x_1+x_2+x_3-x_4$} \xymatrix @R=1pc @C=.1pc{
  &   &   &   & {\begin{matrix}\N=\langle 1\rangle_p,\\ \g=-1\end{matrix}}\ar[d] &  &    \\
  &   &   &   & {\begin{matrix}\langle 2,3\rangle_p,\\ \g=1\end{matrix}}\ar[dll]\ar[drr]&  & \\
  & & {\begin{matrix}\langle 3,4,5\rangle_p,\\ \g=2\end{matrix}}\ar[dl]\ar[drr]\ar[drrr] & & & &
   {\begin{matrix}\langle 2,5\rangle_p,\\ \g=3\end{matrix}}\ar[d]\ar[dlllll] \\
  & {\begin{matrix}\langle 4,5,6,7\rangle_p,\\ \g=3\end{matrix}} \ar[dl]\ar[d]\ar[dr]\ar[drr] & & &
  {\begin{matrix}\langle 3,5,7\rangle_p,\\ \g=4\end{matrix}}\ar[d]\ar[dr] &
  {\begin{matrix} \langle 3,4\rangle_p,\\ \g=5\end{matrix}} &
  {\begin{matrix}\langle 2,7\rangle_p,\\ \g=5\end{matrix}}\ar[d] \ar[dlllll]\\
  {\begin{matrix}\langle 5,6,7,8\rangle_p,\\ \g=4\end{matrix}}\ar@{..}[d] &
  {\begin{matrix}\langle 4,6,7,9\rangle_p,\\ \g=5\end{matrix}}\ar@{..}[d] &
  {\begin{matrix}\langle 4,5,7 \rangle_p,\\ \g=6\end{matrix}}\ar@{..}[d] &
  {\begin{matrix}\langle 4,5,6\rangle_p,\\ \g=7\end{matrix}} &
  {\begin{matrix}\langle 3,7,8\rangle_p,\\ \g=5\end{matrix}}\ar@{..}[d] &
  {\begin{matrix}\langle 3,5\rangle_p,\\ \g=7\end{matrix}} &
  {\begin{matrix}\langle 2,9\rangle_p,\\ \g=7\end{matrix}}\ar@{..}[d]\\
  & & & & & &
  }
\end{figure}

If we compare it with the directed acyclic graph given in \cite{RoGaGaBr} for
Arf numerical semigroups, one readily sees two main differences. This directed
acyclic graph is not a binary tree; for instance $\langle 4,5,6,7\rangle$ has
four ``sons''. Observe also that the numerical semigroups appearing in the
directed acyclic graph are no longer of maximal embedding dimension, as is the
case for Arf numerical semigroups.

The leaves in the portion of the directed acyclic graph drawn in the figure
are $\langle 3,4\rangle$, $\langle 4,5,6\rangle$ and $\langle 3,5\rangle$.\qed
\end{example}

\section{Substraction patterns}
\label{section:substraction}

The pattern $x_1+x_2+\dots+x_{k}-x_{k+1}$ is called the {\em substraction pattern} of
degree~$k$.

Let $q$ be a rational number. Define $\lceil q\rceil =\min \{z \hbox{ integer} ~|~ q\leq z\}$.

\begin{proposition}
\label{proposition:substraction degree} A semigroup $\Lambda$ with conductor $c$ and
multiplicity $m$ admits the substraction pattern of degree
$\left\lceil\frac{c}{m}\right\rceil+1.$
\end{proposition}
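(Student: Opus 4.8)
The plan is to verify the defining condition of pattern admission directly. Writing $k=\left\lceil\frac{c}{m}\right\rceil+1$, the substraction pattern of degree $k$ is $p=x_1+\cdots+x_k-x_{k+1}$, so I must show that for every choice of elements $s_1\geq s_2\geq\cdots\geq s_{k+1}$ in $\Lambda$ one has $p(s_1,\dots,s_{k+1})=s_1+\cdots+s_k-s_{k+1}\in\Lambda$. The whole argument rests on the conductor property: every integer at least $c$ lies in $\Lambda$, so it suffices to show that $p(s_1,\dots,s_{k+1})\geq c$, except in a degenerate case where membership is immediate for a different reason.

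First I would dispose of the case $s_{k+1}=0$. Then $p(s_1,\dots,s_{k+1})=s_1+\cdots+s_k$ is a sum of elements of $\Lambda$, hence lies in $\Lambda$ because $\Lambda$ is closed under addition. This case needs no lower bound at all.

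The main step is the case $s_{k+1}\neq 0$. Since $m=\m(\Lambda)$ is the least positive element of $\Lambda$ and $s_1\geq\cdots\geq s_{k+1}\geq 1$, every $s_i$ satisfies $s_i\geq m$. I would then rewrite
\[
p(s_1,\dots,s_{k+1})=(s_1-s_{k+1})+(s_2+\cdots+s_k).
\]
Here $s_1-s_{k+1}\geq 0$ because $s_1\geq s_{k+1}$, and the remaining block $s_2+\cdots+s_k$ consists of $k-1$ elements each at least $m$, so it is at least $(k-1)m$. By the choice $k-1=\left\lceil\frac{c}{m}\right\rceil\geq \frac{c}{m}$, this yields $p(s_1,\dots,s_{k+1})\geq (k-1)m\geq c$, and the conductor property finishes the argument.

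I do not expect a genuine obstacle; the proof is essentially a counting argument calibrated by the ceiling. The only point to watch is the bookkeeping: the positive block has exactly $k$ summands, and after cancelling its smallest available copy against $s_{k+1}$ one is left with precisely $k-1$ guaranteed copies of (at least) $m$, which is exactly what the degree $\left\lceil\frac{c}{m}\right\rceil+1$ is designed to deliver. Separating the case $s_{k+1}=0$ is necessary, since there the $s_i$ need not all be at least $m$ and the counting bound would fail, so one must instead appeal to closure under addition.
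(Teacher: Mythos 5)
Your proof is correct and follows essentially the same approach as the paper: split off a degenerate case handled by closure under addition, then in the main case cancel $s_{k+1}$ against one positive term and bound the remaining $k-1$ terms below by $(k-1)m\geq c$, invoking the conductor property. The only cosmetic differences are that the paper pairs $s_{k+1}$ with $s_k$ rather than $s_1$, and splits cases on whether $s_{k-1}=0$ rather than on whether $s_{k+1}=0$; the underlying counting argument is identical.
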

\begin{proof} Set
$$k=\left\lceil\frac{c}{m}\right\rceil+1.$$ Suppose that $s_1\geq \dots\geq s_{k+1}$ belong to $\Lambda$. If
$s_{k-1}=0$ then $s_{k}=s_{k+1}=0$ and it is obvious that $s_1+s_2+\dots+s_{k}-s_{k+1}$ belongs
to $\Lambda$. So, we can assume that $s_{k-1}\geq m$. By the inequality relation between the
$s_i$'s and by the definition of $k$,
\begin{equation}
\label{equation:1} s_1+\dots+s_{k-1}\geq (k-1)m\geq c.
\end{equation}
Besides, by the inequality relation between the $s_i$'s,
\begin{equation}
\label{equation:2} s_{k}-s_{k+1}\geq 0 .
\end{equation}
Now by (\ref{equation:1}) and (\ref{equation:2}), $s_1+s_2+\dots+s_{k}-s_{k+1}$ belongs to
$\Lambda$.
\end{proof}

\begin{remark} \label{remark:inclusions} As a consequence of Lemma~\ref{lemma:a un patro li
podem intercalar una variable}, for each integer $n$, the pattern $x_1+x_2+\dots+x_{n-1}-x_n$
induces the pattern $x_1+x_2+\dots+x_{n}-x_{n+1}$. In particular, by
Proposition~\ref{proposition:substraction degree}, every numerical semigroup admits infinitely
many substraction patterns.
\end{remark}

The {\em substraction degree} of a numerical semigroup is the minimum $k$ such that it admits a
substraction pattern of degree $k$.

The substraction degree of a numerical semigroup gives us an idea of how far from substraction
the numerical semigroup is. It can be thought of as the number of elements that we need to add
in order to be able to substract another element smaller than the first ones.

In particular the substraction degree is always finite and larger than or equal to $1$. It will
be equal to $1$ if and only if the numerical semigroup is $\N$ and it will be $2$ if and only if
the numerical semigroup is Arf and non-trivial.

\begin{remark} \label{remark:substraction degree bound}
Proposition~\ref{proposition:substraction degree} gives the following upper bound for the
substraction degree $s$ of a numerical semigroup with conductor $c$ and multiplicity $m$:
$$s\leq\left\lceil\frac{c}{m}\right\rceil+1.$$
\end{remark}

\begin{example} Let $q$ be a prime power. The Hermitian curve over
${\mathbb F}_{q^2}$ is defined by the affine equation $x^{q+1} = y^q + y$ and it has a single
rational point at infinity. The Weierstrass semigroup at the rational point at infinity is
$\Lambda=\langle q,q+1\rangle$ (for further details see \cite{HoLiPe:agc,Stichtenoth}). Its
multiplicity is $q$ and its conductor is $q(q-1)$. So $\left\lceil\frac{c}{m}\right\rceil+1=q$.
Its substraction degree is $q$. Indeed, by Remark~\ref{remark:substraction degree bound}, it is
enough to prove that the substraction pattern of degree $q-1$ is not admitted (see
Proposition~\ref{tres} for a generalization of this fact). Take $s_1=\cdots=s_{q-1}=q+1$ and
$s_q=q$. Then $s_1+\cdots+s_{q-1}-s_q=(q-1)(q+1)-q=q^2-q-1=c-1\not\in\Lambda$.

This means in particular that the bound in Remark~\ref{remark:substraction degree bound} is
tight.\qed
\end{example}

By using the ideas in this example it is not difficult to prove the following.

\begin{proposition}
\label{leches} Two substraction patterns are equivalent if and only if they have the same
degree.
\end{proposition}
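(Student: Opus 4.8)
The plan is to handle the two implications separately. The \emph{if} direction is immediate: two substraction patterns of the same degree are literally the same polynomial, hence trivially equivalent. The whole content lies in the \emph{only if} direction, which I would phrase contrapositively as: if $k<k'$, then the substraction pattern of degree $k$ and the substraction pattern of degree $k'$ are \emph{not} equivalent.

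First I would record what is already free. By Remark~\ref{remark:inclusions}, for every $n$ the pattern of degree $n-1$ induces the pattern of degree $n$; iterating, the substraction pattern of degree $k$ induces that of degree $k'$ whenever $k\leq k'$. So one of the two inductions always holds, and to establish non-equivalence it suffices to show that the pattern of degree $k'$ does \emph{not} induce the pattern of degree $k$. Equivalently, I must exhibit a numerical semigroup lying in $\SS$ of the degree-$k'$ pattern but not in $\SS$ of the degree-$k$ pattern.

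The candidate semigroup, generalizing the Hermitian example, is $\Lambda=\langle k',k'+1\rangle$. I would compute $\m(\Lambda)=k'$ and, using that the Frobenius number of $\langle a,a+1\rangle$ is $a^2-a-1$, the conductor $c=k'^2-k'=k'(k'-1)$. Hence $\lceil c/\m(\Lambda)\rceil+1=(k'-1)+1=k'$, so by Remark~\ref{remark:substraction degree bound} the semigroup $\Lambda$ admits the substraction pattern of degree $k'$. To see that $\Lambda$ does not admit the degree-$k$ pattern, I would show it already fails the degree-$(k'-1)$ pattern: taking $s_1=\cdots=s_{k'-1}=k'+1$ and $s_{k'}=k'$ gives
\[
s_1+\cdots+s_{k'-1}-s_{k'}=(k'-1)(k'+1)-k'=k'^2-k'-1=c-1=\g(\Lambda),
\]
which is not in $\Lambda$. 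Since by Remark~\ref{remark:inclusions} failing the degree-$(k'-1)$ pattern forces $\Lambda$ to fail every substraction pattern of degree $\leq k'-1$, and $k\leq k'-1$, the semigroup $\Lambda$ does not admit the degree-$k$ pattern. This places $\Lambda$ in the required symmetric difference and completes the argument.

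The only real work is the explicit computation for $\Lambda=\langle k',k'+1\rangle$: correctly pinning down its conductor and verifying that the displayed evaluation lands exactly on $\g(\Lambda)$. I expect no genuine obstacle here, since this is precisely the Hermitian computation carried out with $q$ replaced by the general integer $k'$; the one point to check carefully is that the family $\langle m,m+1\rangle$ realizes every substraction degree, so that degrees $k$ and $k'$ can always be separated regardless of the arithmetic nature of $k'$.
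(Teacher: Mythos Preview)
Your proposal is correct and follows exactly the approach the paper intends: the paper's ``proof'' merely points back to the Hermitian example, and you have carried out precisely that computation with the prime power $q$ replaced by an arbitrary integer $k'\geq 2$, which is all that is needed since $\gcd(k',k'+1)=1$. The only cosmetic point is that you could invoke Proposition~\ref{proposition:substraction degree} directly (rather than via Remark~\ref{remark:substraction degree bound}) to see that $\langle k',k'+1\rangle$ admits the degree-$k'$ pattern.
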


Finally, by Remark~\ref{remark:inclusions}, we can get a graded classification of numerical
semigroups by means of the substraction degree. If we denote
$\SS_i=\SS(x_1+\cdots+x_i-x_{i+1})$, the chain
\[\begin{array}[t]{c}\SS_0 \\ \shortparallel \\ \emptyset\end{array}
\subseteq
\begin{array}[t]{c}\SS_1 \\ \shortparallel \\ \{N\}\end{array}
\begin{array}[t]{ccc}\subseteq&\SS_2&\subseteq \\& \shortparallel& \\ &\begin{array}{c}\text{Arf}\\\text{semigroups}\end{array}&\\\end{array}
\SS_3\ \subseteq\ \cdots\ \subseteq\ \SS_i\ \subseteq\ \SS_{i+1}\subseteq\ \cdots\] contains all
numerical semigroups and it is non-stabilizing.

 Next we give a lower bound for the substraction degree based on the structure of the
Ap\'ery set of the numerical semigroup.

Let $\Lambda$ be a numerical semigroup and $\lambda\in \Lambda\setminus\{0\}$. The Ap\'ery set
(see \cite{apery}) of $\lambda$ in $\Lambda$ is the set
\[ \Ap(\Lambda,\lambda)= \{ \gamma\in \Lambda ~|~ \gamma-\lambda\not\in \Lambda\}.\]
It can be easily shown that given $i\in \{0,\ldots,\lambda-1\}$, if $w(i)$ is the least element
in $\Lambda$ congruent with $i$ modulo $\lambda$, then
$\Ap(\Lambda,\lambda)=\{w(0)=0,w(1),\ldots,w(\lambda-1)\}$ and thus this set has finitely many
elements.

Given $\lambda,\lambda'\in \Lambda$, we write $\lambda\leq_\Lambda \lambda'$
if there exists $\lambda''\in \Lambda$ such that $\lambda'=\lambda+\lambda''$
($\lambda<_\Lambda \lambda'$ denotes $\lambda\leq_\Lambda \lambda'$ and
$\lambda\not=\lambda'$). If $w$ and $w'$ are elements of
$\Ap(\Lambda,\lambda)$ such that $w-w'\in \Lambda$, then clearly $w-w'\in
\Ap(\Lambda,\lambda)$. Thus in some way the partial order $\lambda\leq_\Lambda
\lambda'$ can be restricted to the set $\Ap(\Lambda,n)$. A {\em chain} in
$\Ap(\Lambda,\lambda)$ is a sequence of the form $w_1<_\Lambda \cdots
<_\Lambda w_d$, and we say that $d$ is the {\em length} of the chain. We
define the {\em Ap\'ery depth} of $\Lambda$ as the maximum length of the
chains in $\Ap(\Lambda,\m(\Lambda))$. As the cardinality of
$\Ap(\Lambda,\m(\Lambda))$ is $\m(\Lambda)$, the Ap\'ery depth of $\Lambda$ is
bounded by $\m(\Lambda)$.

\begin{example}
Let $\Lambda$ be a numerical semigroup of maximal embedding dimension, that is to say, a
numerical semigroup minimally generated by $\{m=n_1<n_2<\cdots<n_m\}$. Then the reader can
easily check that $\Ap(\Lambda,m)=\{0,n_2,\ldots,n_m\}$ and thus the Ap\'ery depth of $\Lambda$
is 2.

Now let $\Lambda=\langle m,n\rangle$, with $m<n$ and $\gcd\{m,n\}=1$. Then
$\Ap(\Lambda,m)=\{0,n, 2n,\ldots, (m-1)n\}$ and the Ap\'ery depth of $\Lambda$ is $m$.\qed
\end{example}

The Ap\'ery depth yields a lower bound on the substraction degree as we see
next.

\begin{proposition}\label{d1}
Let $\Lambda$ be a numerical semigroup with Ap\'ery depth $d$ and substraction degree $s$. Then
$d\leq s$.
\end{proposition}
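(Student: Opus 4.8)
The plan is to prove the equivalent statement that $\Lambda$ does not admit the substraction pattern of degree $d-1$; by Remark~\ref{remark:inclusions} this immediately gives $s\geq d$. Indeed, were $s\leq d-1$, then $\Lambda$ would admit the substraction pattern of degree $s$, and repeated application of Remark~\ref{remark:inclusions} would show that $\Lambda$ admits the substraction pattern of degree $d-1$ as well. We may assume $d\geq 2$, since for $d=1$ the inequality $d\leq s$ is already contained in the observation that the substraction degree is always at least $1$.

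First I would set $m=\m(\Lambda)$ and choose a chain of maximal length $w_1<_\Lambda w_2<_\Lambda\cdots<_\Lambda w_d$ in $\Ap(\Lambda,m)$. Since $0\in\Ap(\Lambda,m)$ and $0<_\Lambda w$ for every $w\in\Lambda\setminus\{0\}$, maximality forces $w_1=0$: otherwise prepending $0$ would yield a strictly longer chain. Next I would record the successive jumps $v_i=w_{i+1}-w_i$ for $i=1,\dots,d-1$. By the definition of $<_\Lambda$ each $v_i$ lies in $\Lambda\setminus\{0\}$, so $v_i\geq m$ because $m$ is the multiplicity; and telescoping together with $w_1=0$ gives $w_d=v_1+\cdots+v_{d-1}$.

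Finally I would reorder $v_1,\dots,v_{d-1}$ decreasingly to obtain $s_1\geq\cdots\geq s_{d-1}$ and set $s_d=m$. Since every $v_i\geq m$ we have $s_{d-1}\geq m=s_d$, so $s_1\geq\cdots\geq s_{d-1}\geq s_d$ is a valid decreasing sequence of elements of $\Lambda$, and the substraction pattern of degree $d-1$ evaluates on it to
\[
s_1+\cdots+s_{d-1}-s_d=(v_1+\cdots+v_{d-1})-m=w_d-m.
\]
But $w_d\in\Ap(\Lambda,m)$ means precisely that $w_d-m\notin\Lambda$, so $\Lambda$ does not admit this pattern, as required.

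I do not expect a serious obstacle; the only points needing care are the normalization $w_1=0$ of a maximal chain and the degenerate case $d=1$. The conceptual heart of the argument is the observation that a maximal Ap\'ery chain exhibits $w_d$ as a sum of $d-1$ elements of $\Lambda$, each of size at least $m$, which is exactly the configuration that a substraction pattern of degree $d-1$ would be forced to absorb, and which the Ap\'ery membership of $w_d$ prevents.
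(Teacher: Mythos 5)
Your proof is correct and follows essentially the same route as the paper's: take a maximal chain $w_1<_\Lambda\cdots<_\Lambda w_d$ in $\Ap(\Lambda,\m(\Lambda))$, telescope the differences (each at least $\m(\Lambda)$), and evaluate the substraction pattern of degree $d-1$ on them with last entry $\m(\Lambda)$ to land on $w_d-\m(\Lambda)\not\in\Lambda$. You are in fact slightly more careful than the paper, which leaves implicit the decreasing reordering of the differences, the justification that $w_1=0$, and the appeal to Remark~\ref{remark:inclusions} to pass from ``does not admit degree $d-1$'' to $s\geq d$.
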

\begin{proof}
Let $w_1<_\Lambda \ldots <_\Lambda w_d$ be a chain of maximal length (this implies that $w_1=0$)
in $\Ap(\Lambda,\m(\Lambda))$. Then $(w_2-w_1)+(w_3-w_2)+ \cdots +(w_d-w_{d-1})= w_d-w_1=w_d$.
Let $x_i=(w_{i+1}-w_i)$ for $i\in \{1,\ldots, d-1\}$ and let $x_d=\m(\Lambda)$. Then
$x_1+\cdots+x_{d-1}-x_d= w_d-\m(\Lambda)\not \in \Lambda$. As $x_i\not=0$ for all $i\in
\{1,\ldots,d-1\}$, this in particular implies that $x_i\geq x_d=\m(\Lambda)$ for all $i\in
\{1,\ldots,d-1\}$. This shows that $\Lambda$ does not admit the pattern
$x_1+\cdots+x_{d-1}-x_d$, which implies that $d-1<s$. Hence $d\leq s$.
\end{proof}

Unfortunately the other inequality (and thus the equality) does not hold.

\begin{example}
Let $\Lambda=\langle 3,8,13\rangle$. The reader can check that $\Lambda\in
\SS(x_1+x_2+x_3-x_4)$. Observe that $8+8-6=10\not\in \Lambda$, which in particular implies that
$\Lambda\not\in \SS(x_1+x_2-x_3)$. Thus the substraction degree of $\Lambda$ is $3$ and its
Ap\'ery depth is $2$ ($\Lambda$ has maximal embedding dimension).\qed
\end{example}

\section{Boolean patterns}
\label{section:boolean}

A pattern is called {\em boolean} if all its coefficients are either $1$ or $-1$. Notice that
the Arf pattern as well as all substraction patterns are boolean.

Let $p$ be the substraction pattern of degree $k$. Observe that $p^{(k)}=-x_1$ is not admissible
whereas for $i<k$, $p^{(i)}$ is an admissible pattern. Generalizing this idea we define the {\em
admissibility degree} of a pattern $p$ as the least $k$ such that $p^{(k)}$ is not admissible.
If this minimum does not exist (this occurs exactly for those patterns described in
Example~\ref{ejemplo 1}), then the admissibility degree is said to be $\infty$. Clearly if a
pattern $p$ is not admissible, then its admissibility degree is $0$.

\begin{lemma}\label{uno}
A boolean pattern $p$ with finite positive admissibility degree $k$ can be written as
$$p(x_1,\dots,x_n)=f(x_1,\dots,x_{k-1})+g(x_{k},\dots,x_{l})+h(x_{l+1},\dots,x_n),$$
where all coefficients in $f$ are positive, both $g$ and $h$ are admissible, the sum of all
coefficients of $g$ is equal to $0$ and the sum of all coefficients of $h$ is positive.
\end{lemma}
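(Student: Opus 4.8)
The plan is to translate the entire statement into the language of the partial sums $T_j=\sum_{i=1}^{j}a_i$, with $T_0=0$; these are integer-valued and, since $p$ is boolean, consecutive ones differ by $\pm 1$. First I would record what the admissibility degree means here. Because every coefficient is $1$ or $-1$, the case $a_1>1$ in the definition of $p'$ never occurs, so at each step $p'$ is obtained simply by deleting the leading coefficient; hence $p^{(i)}=\sum_{j=1}^{n-i}a_{i+j}x_j$. Unwinding the definition of admissibility, $p^{(i)}$ is admissible if and only if $T_m\ge T_i$ for every $m\ge i$. Since $p^{(0)},\dots,p^{(k-1)}$ are all admissible, each has non-negative leading coefficient $a_{i+1}\ge 0$ for $i=0,\dots,k-1$; being boolean, this forces $a_1=\cdots=a_k=1$. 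In particular $f(x_1,\dots,x_{k-1})=x_1+\cdots+x_{k-1}$ has all coefficients positive, and $T_{k-1}=k-1$, $T_k=k$.

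Next I would locate the index $l$ at which the balanced middle block $g$ should end. Since $p^{(k)}$ fails to be admissible, there is some $m>k$ with $T_m<T_k=k$; as $p^{(k-1)}$ is admissible we also have $T_m\ge T_{k-1}=k-1$, and integrality then forces $T_m=k-1$. Thus the level $T_{k-1}$ is attained again beyond $k$. I would set $l$ to be the \emph{largest} index with $T_l=T_{k-1}$; by the preceding remark $l>k$. I then define $g$ to consist of the coefficients $a_k,\dots,a_l$ and $h$ of the remaining coefficients $a_{l+1},\dots,a_n$, so that $p=f+g+h$ with the claimed variable ranges.

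It then remains to verify the three required properties, all of which reduce to the admissibility reformulation above. The coefficient sum of $g$ is $T_l-T_{k-1}=0$ by the choice of $l$; the admissibility of $g$ amounts to $T_m\ge T_{k-1}$ for $k\le m\le l$, which is exactly the admissibility of $p^{(k-1)}$ restricted to that range. Likewise $h$ is admissible, since for $m\ge l$ one has $T_m\ge T_{k-1}=T_l$, again by the admissibility of $p^{(k-1)}$. Finally, the coefficient sum of $h$ is $T_n-T_l=T_n-T_{k-1}$, and the maximality of $l$ guarantees $T_m>T_{k-1}$ for all $m>l$, so this sum is strictly positive whenever $l<n$.

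The delicate point, and the step I expect to be the real obstacle, is precisely the positivity (indeed the non-emptiness) of the tail $h$: one must rule out $T_n=T_{k-1}$, in which case $l=n$, the block $h$ vanishes, and its coefficient sum is $0$ rather than positive. This is exactly what occurs for the subtraction pattern of degree $k$, where the decomposition degenerates to $f+g$; so the proof must either exploit the hypothesis in the precise form that forces a genuine positive terminal tail, or the statement should be read as permitting $h$ to be empty. Choosing $l$ as the last (rather than the first) return to the level $T_{k-1}$ is what makes the tail clean, and pinning down this boundary case is where the care is needed; once the partial-sum reformulation is in place, every other verification is immediate.
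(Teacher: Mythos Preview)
Your approach is correct and is essentially the paper's own proof, expressed through partial sums: the paper also writes $p=x_1+\cdots+x_k+q(x_{k+1},\dots,x_n)$, then takes $l$ to be the \emph{largest} index with $\sum_{i=k+1}^{l}a_i=-1$ (equivalently your $T_l=T_{k-1}$), and defines $f$, $g$, $h$ exactly as you do.

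Your concern about the boundary case is legitimate and is not a gap in your argument but in the statement: when $l=n$ (as for the substraction pattern itself) the tail $h$ is empty and its coefficient sum is $0$, not positive. The paper's proof does not establish positivity in that case either---it only derives $\sum_{i=l+1}^{m}a_i>0$ for $m\in\{l+1,\ldots,n\}$, which is vacuous when $l=n$. In all subsequent uses of the lemma (Propositions~\ref{induce} and~\ref{tres}) only $h(s_{l+1},\ldots,s_n)\ge 0$ is needed, so reading the clause as ``non-negative'' (or allowing $h$ to be empty) is exactly how it functions in the paper. You have diagnosed this precisely; there is nothing further to prove.
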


\begin{proof}
Assume that $p=\sum_{i=1}^na_ix_i$ with $a_i\in\{-1,1\}$. By hypothesis $p$ can be expressed as
$x_1+\cdots+x_k+q(x_{k+1},\dots,x_n)$ where $q$ is a non-admissible pattern such that
$x_1+q(x_2,\dots,x_{n-k+1})$ is admissible. By Theorem~\ref{theorem:condicions equivalents a
admissible}, this means that there exists $l> k$ such that $\sum_{i=k+1}^la_i=-1$. Taking the
largest of such integers, we obtain that $\sum_{i=l+1}^ma_i>0$ for $m\in\{l+1,\ldots,n\}$. The
result follows by taking $f(x_1,\dots,x_{k-1})=x_1+\cdots+x_{k-1}$,
$g(x_{k},\dots,x_{l})=x_k+\sum_{i=k+1}^la_ix_i$ and $h(x_{l+1},\dots,x_n)=\sum_{i=l+1}^na_ix_i$.
\end{proof}

The next property stresses how the concept of admissibility degree generalizes
that of the degree of a substraction pattern.

\begin{proposition}\label{induce}
A boolean pattern with admissibility degree $k$ induces the substraction pattern of degree $k$.
\end{proposition}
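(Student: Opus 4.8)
The plan is to build on the structural decomposition provided by Lemma~\ref{uno}. First I would dispose of the degenerate case $k=0$: here $p$ is non-admissible, and the substraction pattern of degree $0$ is $-x_1$, which is likewise non-admissible. Since all non-admissible patterns are equivalent, $p$ induces it (indeed vacuously, as $\SS(p)=\emptyset$). The real content lies in the case of a finite positive admissibility degree $k$, which is precisely the hypothesis under which Lemma~\ref{uno} applies.

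So assume $k$ is finite and positive and write $p=f+g+h$ as in Lemma~\ref{uno}, with $f=x_1+\cdots+x_{k-1}$, with $g=x_k+\sum_{i=k+1}^{l}a_ix_i$ whose coefficients sum to $0$ (so that $\sum_{i=k+1}^{l}a_i=-1$), and with $h=\sum_{i=l+1}^{n}a_ix_i$. Let $\Lambda$ be a numerical semigroup admitting $p$ and let $t_1\geq t_2\geq\cdots\geq t_{k+1}$ be elements of $\Lambda$; I must produce $t_1+\cdots+t_k-t_{k+1}\in\Lambda$. The idea is to feed $p$ a suitably ordered tuple so that $f$ produces $t_1+\cdots+t_{k-1}$, the block $g$ produces $t_k-t_{k+1}$ (exploiting that its coefficients sum to $0$ while its leading coefficient is $+1$), and $h$ vanishes.

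Concretely, I would set $s_i=t_i$ for $1\leq i\leq k$, set $s_i=t_{k+1}$ for $k+1\leq i\leq l$, and set $s_i=0$ for $l+1\leq i\leq n$. Because $t_1\geq\cdots\geq t_{k+1}\geq 0$, these satisfy $s_1\geq s_2\geq\cdots\geq s_n$ and all lie in $\Lambda$. Evaluating term by term, $f$ contributes $t_1+\cdots+t_{k-1}$; within $g$ the leading term gives $t_k$ while the remaining terms give $\bigl(\sum_{i=k+1}^{l}a_i\bigr)t_{k+1}=-t_{k+1}$; and $h(0,\dots,0)=0$. Hence $p(s_1,\dots,s_n)=t_1+\cdots+t_k-t_{k+1}$, which belongs to $\Lambda$ since $\Lambda$ admits $p$. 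This shows that $\Lambda$ admits the substraction pattern of degree $k$, as required.

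The one point demanding care is the monotonicity constraint $s_1\geq\cdots\geq s_n$, and this is exactly why the decomposition of Lemma~\ref{uno} is the right tool: the positive block $f$ together with the leading $+x_k$ of $g$ supplies the $k$ summands, the vanishing coefficient sum of $g$ lets the trailing variables $x_{k+1},\dots,x_l$ collapse to the single subtraction $-t_{k+1}$ when they are all assigned the common value $t_{k+1}$, and the tail $h$ is annihilated by zeroing its variables without disturbing the ordering. Beyond verifying these three evaluations and the monotone chain, no genuine obstacle remains, all being routine consequences of the explicit shapes of $f$, $g$, and $h$.
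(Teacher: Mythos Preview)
Your proof is correct and follows essentially the same approach as the paper: decompose $p=f+g+h$ via Lemma~\ref{uno}, then evaluate at $(t_1,\ldots,t_k,t_{k+1},\ldots,t_{k+1},0,\ldots,0)$ so that $f$ yields $t_1+\cdots+t_{k-1}$, $g$ collapses to $t_k-t_{k+1}$, and $h$ vanishes. Your version is in fact slightly more careful than the paper's, since you explicitly dispose of the degenerate case $k=0$ and spell out why the monotonicity constraint is preserved.
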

\begin{proof}
Let $p$ be a boolean pattern with admissibility degree $k$. In view of Lemma \ref{uno}, $p$ can
be expressed as $p=f+g+h$. Assume that $\Lambda$ admits $p$ and let us prove that $\Lambda$ also
admits $x_1+\cdots+x_k-x_{k+1}$. Let $s_1\geq \cdots \geq s_k\geq s_{k+1}$ be elements of
$\Lambda$. From the proof of Lemma \ref{uno}, one easily deduces that
$g(s_k,s_{k+1},\ldots,s_{k+1})=s_k-s_{k+1}$. Hence
\begin{multline*}
p(s_1,\ldots,s_k,s_{k+1},\ldots,s_{k+1},0,\ldots,0)=\\
f(s_1,\ldots,s_{k-1})+g(s_k,s_{k+1},\ldots,s_{k+1})+h(0,\ldots,0)\\=s_1+\cdots+s_k-s_{k+1}\in
\Lambda.
\end{multline*}
\end{proof}

Hermitian numerical semigroups can be used to discriminate patterns with
different admissible degrees, as we see next.

\begin{proposition}\label{tres}
The numerical semigroup $\langle q,q+1\rangle$, with $q\geq 2$ admits a
boolean pattern if and only if its admissibility degree is greater than or
equal to $q$.
\end{proposition}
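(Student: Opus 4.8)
The plan is to prove the two implications separately: the forward implication is short and rests on Proposition~\ref{induce} together with the substraction degree of $\langle q,q+1\rangle$, while the converse requires a direct computation inside $\langle q,q+1\rangle$ and is the substantial part.

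For the implication that admitting a boolean pattern forces admissibility degree at least $q$, I argue by contraposition. Suppose $p$ is a boolean pattern with admissibility degree $k<q$. If $k=0$ then $p$ is not admissible and no numerical semigroup admits it. If $1\le k<q$, then by Proposition~\ref{induce} the pattern $p$ induces the substraction pattern of degree $k$, so any semigroup admitting $p$ also admits $x_1+\cdots+x_k-x_{k+1}$. However, the substraction degree of $\langle q,q+1\rangle$ equals $q$ (as computed in the Hermitian example preceding Proposition~\ref{leches}), and by Remark~\ref{remark:inclusions} this means $\langle q,q+1\rangle$ admits no substraction pattern of degree smaller than $q$. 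Hence $\langle q,q+1\rangle$ cannot admit $p$, which is exactly the contrapositive of the desired implication.

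For the converse, assume $p=\sum_{i=1}^n a_ix_i$ is boolean with admissibility degree $k\ge q$. Since $q\ge 2$, this forces $p$ to be strongly admissible and $a_1=\cdots=a_q=1$. Fix a decreasing sequence $s_1\ge\cdots\ge s_n$ in $\Lambda=\langle q,q+1\rangle$; I must show $p(s_1,\dots,s_n)\in\Lambda$. The first step is to dispose of the large case: as $p$ is strongly admissible, Lemma~\ref{strictly-admissible} gives $p(s_1,\dots,s_n)\ge s_1$, and the conductor of $\Lambda$ is $c=q(q-1)$, so whenever $s_1\ge c$ we get $p(s_1,\dots,s_n)\ge c$ and hence $p(s_1,\dots,s_n)\in\Lambda$. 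Thus I may assume $s_1<c$, so that every $s_i<q(q-1)$.

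The remaining case is handled by the explicit arithmetic of $\Lambda$: an integer $x$ lies in $\langle q,q+1\rangle$ if and only if $\lfloor x/q\rfloor\ge x\bmod q$. Writing $s_i=qm_i+r_i$ with $0\le r_i<q$, the conditions $s_i\in\Lambda$ and $s_i<c$ give $0\le r_i\le m_i\le q-2$, while $s_1\ge\cdots\ge s_n$ yields $m_1\ge\cdots\ge m_n\ge 0$. Then $p(s_1,\dots,s_n)=qM+R$ where $M=p(m_1,\dots,m_n)$ and $R=p(r_1,\dots,r_n)$, and since the quotients are decreasing, Lemma~\ref{strictly-admissible} gives $M\ge m_1$. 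Membership reduces to proving $\lfloor p(s)/q\rfloor\ge p(s)\bmod q$, that is $(R\bmod q)-\lfloor R/q\rfloor\le M$. The main obstacle is precisely this last inequality: the remainders $r_i$ are \emph{not} monotone, so admissibility cannot be applied to $(r_1,\dots,r_n)$ directly. I expect to control $R$ by matching each negative coefficient of $p$ to an earlier positive one (possible because all partial sums of $p$ are nonnegative) and then using $r_i\le m_i$ together with the fact that the leading $q$ coefficients are $+1$, so that the positive contribution recorded by $M$ dominates the signed remainder combination $R$. Establishing this bound is the crux; once it holds, $p(s_1,\dots,s_n)\in\Lambda$, and therefore $\langle q,q+1\rangle$ admits $p$.
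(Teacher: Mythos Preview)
Your forward direction is correct and coincides with the paper's: both reduce via Proposition~\ref{induce} to the substraction pattern of degree $k$ and then use the explicit witness $s_1=\cdots=s_k=q+1$, $s_{k+1}=q$ (your appeal to the Hermitian example is exactly that computation).

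The converse, however, is left genuinely incomplete. You yourself flag the inequality $(R\bmod q)-\lfloor R/q\rfloor\le M$ as ``the crux'' without proving it, and the plan of ``matching each negative coefficient to an earlier positive one'' together with $r_i\le m_i$ is not turned into an actual bound; since the $r_i$ are not monotone, admissibility gives no direct control over $R$, and it is not clear this line can be finished without importing the very idea you are avoiding. The structural shortcut you miss is that strong admissibility alone (Lemma~\ref{strictly-admissible}) only yields $p(s_1,\ldots,s_n)\ge s_1$, which reduces to $s_1<c$ but still leaves a nontrivial arithmetic problem. The paper instead uses the decomposition of Lemma~\ref{uno}: writing $p=f+g+h$ with $f(x_1,\ldots,x_{k-1})=x_1+\cdots+x_{k-1}$ and $g,h$ admissible, Lemma~\ref{dos} gives $g(s_k,\ldots,s_l)\ge 0$ and $h(s_{l+1},\ldots,s_n)\ge 0$, hence the stronger bound $p(s_1,\ldots,s_n)\ge s_1+\cdots+s_{k-1}$. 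Now only two trivial cases remain: if $s_k=0$ then $p(s_1,\ldots,s_n)=s_1+\cdots+s_{k-1}$ is a sum of elements of $\Lambda$; if $s_k\ne 0$ then $s_{k-1}\ge q$ and $p(s_1,\ldots,s_n)\ge(k-1)q\ge(q-1)q=c$, so it lies above the conductor. No quotient--remainder analysis is needed at all. In short, admissibility degree $k$ buys you $p(s)\ge s_1+\cdots+s_{k-1}$, not merely $p(s)\ge s_1$, and that stronger inequality is the whole proof.
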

\begin{proof}
Let $p$ be a boolean pattern of length $n$ and admissibility degree $k$. Let
$f$, $g$ and $h$ be as in Lemma \ref{uno}. The sum of the coefficients of $g$
is $0$ and the sum of the coefficients of $h$ is a non-negative (in fact
positive) integer amount, say $S$.

Assume that $k\geq q$ and let $s_1\geq \cdots \geq s_n$ be elements of $\langle q,q+1\rangle$.
We must prove that $p(s_1,\ldots,s_n)\in \langle q,q+1\rangle$. We distinguish two cases:
\begin{itemize}
\item If $s_k<q$, then $s_k=0=s_{k+1}=\cdots=s_n$. Hence
$p(s_1,\ldots,s_n)=f(s_1,\ldots,s_{k-1})=s_1+\cdots+s_{k-1}$ which trivially belongs to $\langle
q,q+1\rangle$.

\item If $s_k\geq q$, then $p(s_1,\ldots,s_n)=f(s_1,\ldots,s_{k-1})+g(s_{k},\ldots,s_l)+
h(s_{l+1},\ldots,s_n)$. By Lemmas \ref{dos} and \ref{11}, we deduce that
$g(s_{k},\ldots,s_l)\geq 0$ and $h(s_{l+1},\ldots,s_n)\geq S s_n\geq 0$. Hence
$p(s_1,\ldots,s_n)\geq (k-1)s_{k-1}\geq (k-1)q$, since $s_{k-1}\geq s_k\geq
q$. As we are assuming that $k\geq q$, we obtain that $p(s_1,\ldots,s_n)\geq
(q-1)q$, which is the conductor of $\langle q,q+1\rangle$. This implies that
$p(s_1,\ldots,s_n)\in \langle q,q+1\rangle$.
\end{itemize}


Assume now that $k<q$ and that $\langle q,q+1\rangle$ admits $p$. By Lemma \ref{induce}, the
semigroup $\langle q,q+1\rangle$ also admits $x_1+\cdots+x_k-x_{k+1}$. Then, by evaluating this
pattern in $s_1=\cdots=s_k=q+1$ and $s_{k+1}=q$, one gets that $k(q+1)-q$ should be in $\langle
q,q+1\rangle$. However, $\Ap(\langle q,q+1\rangle,q)=\{0,q+1,2(q+1), \ldots,(q-1)(q+1)\}$, and
thus $k(q+1)\in\Ap(\langle q,q+1\rangle,q)$, which means that $k(q+1)-q\not\in \langle
q,q+1\rangle$, a contradiction.
\end{proof}

This result has a nice consequence.

\begin{corollary}\label{cuatro}
Two equivalent boolean patterns have the same admissibility degree.
\end{corollary}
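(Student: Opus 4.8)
The plan is to use the Hermitian semigroups $\langle q,q+1\rangle$ from Proposition~\ref{tres} as ``detectors'' for the admissibility degree, exploiting that two equivalent patterns admit exactly the same numerical semigroups. First I would record the basic observation that equivalence preserves admissibility: by Theorem~\ref{theorem:condicions equivalents a admissible} a pattern $p$ is admissible if and only if $\N\in\SS(p)$, and this membership is shared by equivalent patterns. Hence if $p_1$ and $p_2$ are equivalent boolean patterns, then $p_1$ is admissible if and only if $p_2$ is, so $p_1$ has admissibility degree $0$ exactly when $p_2$ does. This disposes of the non-admissible case, and from now on I may assume both patterns are admissible, i.e. both degrees are at least $1$.

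For the main step, let $k_1$ and $k_2$ denote the admissibility degrees of $p_1$ and $p_2$. Proposition~\ref{tres} states that for every $q\geq 2$ the semigroup $\langle q,q+1\rangle$ admits a boolean pattern if and only if its admissibility degree is at least $q$. Since $p_1$ and $p_2$ are equivalent, $\langle q,q+1\rangle\in\SS(p_1)$ if and only if $\langle q,q+1\rangle\in\SS(p_2)$, and therefore
\[
k_1\geq q \iff k_2\geq q \qquad\text{for every } q\geq 2.
\]
In other words the two sets $\{q\geq 2 \mid k_1\geq q\}$ and $\{q\geq 2\mid k_2\geq q\}$ coincide. From this I can read off $k_1=k_2$: if both degrees are finite and at least $2$, each equals the maximum of the common set; if the common set is empty then both degrees are at most $1$, hence (being admissible) both equal $1$; and if the common set is unbounded then both degrees are $\infty$, since any finite degree would give a bounded set.

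The argument is short once Proposition~\ref{tres} is in hand, so there is no single deep obstacle; the only point requiring care is the low-degree boundary. Proposition~\ref{tres} speaks only about $q\geq 2$, so the Hermitian detectors cannot by themselves separate degree $0$ from degree $1$ (both give an empty detector set). This is precisely why the preliminary admissibility observation is needed: it pins down whether the degree is $0$ or at least $1$, after which the Hermitian family determines the exact value. One should also verify that the $\infty$ case, realized by the all-positive boolean pattern of Example~\ref{ejemplo 1}, is handled correctly, which it is: an infinite degree is detected by the detector set being all of $\{q\geq 2\}$, whereas any finite degree yields a bounded set.
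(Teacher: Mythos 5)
Your proposal is correct and is precisely the argument the paper intends: Corollary~\ref{cuatro} is stated as an immediate consequence of Proposition~\ref{tres}, using the semigroups $\langle q,q+1\rangle$ as detectors together with the fact that equivalent patterns have the same set $\SS(\cdot)$. The paper omits the details entirely, so your explicit handling of the boundary cases (degree $0$ versus $1$ via preservation of admissibility, and the $\infty$ case via unboundedness of the detector set) is a faithful, and slightly more careful, write-up of the same proof.
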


\begin{example}
Let $p(x_1,x_2)=10x_1-7x_2$. Note that $p^{(3)}=7x_1-7x_2$, which is admissible. Nevertheless
$p^{(4)}(x_1,x_2)=6x_1-7x_2$ which is not admissible. However $\langle 5,6\rangle$ admits $p$.
This example points out that Proposition \ref{tres} could be false for non-boolean patterns.
\qed
\end{example}

\begin{lemma}\label{nuevo}
  Let $\Lambda$ be an Arf numerical semigroup. Take $s_1\geq \cdots \geq s_n\in \Lambda$. Then
  $s_1+\sum_{i=2}^n a_i s_i\in \Lambda$ for any $\{a_2,\ldots,a_n\}\subseteq \{-1,1\}$ such that
  $\sum_{i=2}^m a_i\geq 0$ for all $m\in \{2,\ldots,n\}$.
\end{lemma}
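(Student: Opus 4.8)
The plan is to use the Arf condition as the sole engine: whenever $r,u,v\in\Lambda$ satisfy $r\geq u\geq v$, the Arf property gives $r+u-v\in\Lambda$. The strategy is to realize the target $s_1+\sum_{i=2}^n a_is_i$ as $s_1$ to which we successively add quantities of the form $s_i-s_j$ (with $i<j$) and $s_k$, while keeping a running total that stays inside $\Lambda$ and never drops below $s_1$.

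First I would record that the hypothesis $\sum_{i=2}^m a_i\geq 0$ for all $m$ says, reading $a_2,\dots,a_n$ as up-steps ($+1$) and down-steps ($-1$), that every prefix has at least as many up-steps as down-steps. By the classical greedy (stack) matching, equivalently by Hall's condition, each index $j$ with $a_j=-1$ can be matched injectively to an earlier index $i<j$ with $a_i=+1$, leaving some up-step indices unmatched. This yields a decomposition
\[
\sum_{i=2}^n a_is_i \;=\; \sum_{(i,j)\text{ matched}}(s_i-s_j)\;+\;\sum_{k\text{ unmatched}}s_k,
\]
in which every summand is non-negative: $s_i-s_j\geq 0$ because $i<j$ forces $s_i\geq s_j$, and $s_k\geq 0$.

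Then I would build up the result one summand at a time. Initialize $r:=s_1\in\Lambda$, which satisfies $r\geq s_1$. Adding an unmatched term $s_k$ keeps $r$ in $\Lambda$ by closure under addition and only increases it. Adding a matched difference $s_i-s_j$ is handled by the Arf property with $x=r$, $y=s_i$, $z=s_j$: the invariant $r\geq s_1\geq s_i$ (valid since $i\geq 2$) together with $s_i\geq s_j$ gives $r\geq s_i\geq s_j$, so $r+s_i-s_j\in\Lambda$, and since $s_i\geq s_j$ this new value is again $\geq s_1$. As addition is commutative the order of processing is irrelevant, and after handling all summands the running total equals $s_1+\sum_{i=2}^n a_is_i$, which therefore lies in $\Lambda$ (the case $n=1$ being the empty decomposition).

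The only delicate point is maintaining the invariant $r\geq s_1$ so that the Arf inequality $r\geq s_i$ is available at each application; here it is automatic, since every increment $s_i-s_j$ and every $s_k$ is non-negative. Everything else is routine: the existence of the injective matching is the standard ballot/Hall argument, and the remainder is repeated use of the two defining properties of an Arf numerical semigroup. I would note in passing that the same conclusion follows by induction on $n$, peeling off $+s_n$ when $a_n=1$ and pairing the final down-step with the last preceding up-step when $a_n=-1$; the decomposition above simply performs all these pairings simultaneously.
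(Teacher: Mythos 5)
Your proof is correct and is essentially the paper's argument in different packaging: the paper proceeds by induction on $n$, locating the first index $i$ with $a_i=-1$ and using the Arf property to absorb $s_{i-1}-s_i$ into the first term (replacing $s_1$ by $s_1'=s_1+s_{i-1}-s_i$), which, when iterated, performs exactly your greedy stack matching and maintains exactly your invariant that the running total stays $\geq s_1\geq s_i\geq s_j$. Your version merely makes the matching explicit up front (the ballot/Hall argument) and replaces the induction by a loop with an invariant; the engine---the Arf condition applied to the triple consisting of the running total, $s_i$ and $s_j$---is identical.
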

\begin{proof}
    Let $x=s_1+\sum_{i=2}^n a_i s_i$. We use induction on $n$. For $n\in\{2,3\}$, the result
    follows trivially from the definition of Arf numerical semigroup. Assume that $n>3$. If
    $a_i=1$ for all $i$, then we are done. Thus assume on the contrary that $a_i=-1$ for some
    $i\in \{2,\ldots,n\}$, and let $i$ be the minimum integer fulfilling this condition. From
    the hypothesis, we deduce that $i>2$. Then $x=s_1+s_2+\cdots+s_{i-1}-s_i+\sum_{j=i+1}^n
    a_js_j$. Let $s_1'=s_1+s_{i-1}-s_i$. As $\Lambda$ is Arf, $s_1'\in \Lambda$. Then
    $x=s_1'+s_2+\cdots+s_{i-2}+\sum_{j=i+1}^n a_j s_j$, which is an expression of $x$ with
    length less than $n$ and fulfilling the hypothesis of the statement. By the induction hypothesis
    we deduce that $x\in \Lambda$.
\end{proof}

\begin{proposition}
\label{cinco}
\begin{enumerate}
\item
All boolean patterns with admissibility degree $0$ are equivalent.
\item
All boolean patterns with admissibility degree $1$ are equivalent to the trivializing pattern.
\item
All boolean patterns with admissibility degree $2$ are equivalent to the Arf pattern.
\end{enumerate}
\end{proposition}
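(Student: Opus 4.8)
The plan is to treat the three cases separately, with the real content concentrated in part (3).

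Part (1) is immediate. A pattern has admissibility degree $0$ exactly when it is not admissible. Since all non-admissible patterns are equivalent (as recorded in the remark following Theorem~\ref{theorem:condicions equivalents a admissible}, because $\SS(p)=\emptyset$ for every non-admissible $p$, so any two such patterns induce each other vacuously), the boolean patterns of admissibility degree $0$ are all equivalent, and nothing further is required.

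For parts (2) and (3) the strategy is uniform: produce the two inductions that together give the asserted equivalence, obtaining one of them for free from Proposition~\ref{induce}. That proposition says a boolean pattern $p$ of admissibility degree $k$ induces the substraction pattern of degree $k$. For $k=1$ this substraction pattern is $x_1-x_2$, the trivializing pattern; for $k=2$ it is $x_1+x_2-x_3$, the Arf pattern. So in part (2) we already know $p$ induces the trivializing pattern, and in part (3) we already know $p$ induces the Arf pattern. It remains to establish the reverse inductions. The reverse direction in part (2) is easy: writing $q$ for the trivializing pattern we have $\SS(q)=\{\N\}$, and since $p$ is admissible (its admissibility degree is $1\neq 0$), Theorem~\ref{theorem:condicions equivalents a admissible} gives $\N\in\SS(p)$; hence $\SS(q)\subseteq\SS(p)$, so $q$ induces $p$ and the two patterns are equivalent.

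The reverse induction in part (3) is where the work lies, and it is precisely what Lemma~\ref{nuevo} was prepared for. Write $p=\sum_{i=1}^n a_ix_i$ with $a_i\in\{-1,1\}$. Because $p$ has admissibility degree $2$, both $p$ and $p'$ are admissible. Admissibility of $p$ forces $a_1\geq 0$, hence $a_1=1$, so $p=x_1+\sum_{i=2}^n a_ix_i$; and since $p'=\sum_{i=2}^n a_ix_{i-1}$ for a boolean pattern with $a_1=1$, admissibility of $p'$ translates exactly into the condition $\sum_{i=2}^m a_i\geq 0$ for all $m\in\{2,\dots,n\}$. These are precisely the hypotheses of Lemma~\ref{nuevo}, so for any Arf numerical semigroup $\Lambda$ and any $s_1\geq\cdots\geq s_n$ in $\Lambda$ we get $p(s_1,\dots,s_n)=s_1+\sum_{i=2}^n a_is_i\in\Lambda$. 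Thus every Arf semigroup admits $p$, i.e.\ the Arf pattern induces $p$, which combined with the forward direction yields equivalence. The main obstacle is therefore the bookkeeping that matches the admissibility of $p$ and $p'$ to the sign conditions $a_1=1$ and $\sum_{i=2}^m a_i\geq 0$; once that translation is in place, Lemma~\ref{nuevo} supplies the rest directly.
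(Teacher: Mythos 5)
Your proof is correct and takes essentially the same route as the paper: part (1) via the vacuous equivalence of non-admissible patterns, the forward inductions from Proposition~\ref{induce}, the reverse induction in part (2) from the fact that $\N$ admits every admissible pattern, and the reverse induction in part (3) from Lemma~\ref{nuevo}. Your explicit bookkeeping showing that admissibility of $p$ and $p'$ forces $a_1=1$ and $\sum_{i=2}^m a_i\geq 0$ for all $m\in\{2,\dots,n\}$ merely fills in the step the paper dismisses as ``follows easily from Lemma~\ref{nuevo}.''
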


\begin{proof}
The first point is trivial since the patterns with admissibility degree $0$
are admitted by no semigroup. By Proposition~\ref{induce}, it is enough to
prove that the trivializing pattern induces any pattern with admissibility
degree $1$ and that the Arf pattern induces any pattern with admissibility
degree $2$. The first part is to say that any pattern with admissibility
degree $1$ is admitted by $\N$, which is obvious. The second part follows
easily from Lemma \ref{nuevo}.
\end{proof}

\begin{example}
Again, Proposition~\ref{cinco} could be false for non-boolean patterns. For
instance, the pattern $5x_1-5x_2$ has admissibility degree $1$ as does the
trivializing pattern. However, the trivializing pattern is admitted only by
$\N$, while $5x_1-5x_2$ is admitted by any numerical semigroup containing $5$
and not necessarily the trivial semigroup. On the other hand, the pattern
$10x_1-9x_2$ has admissibility degree $2$ as does the Arf pattern. However,
the semigroup $$\Lambda=\langle 4,5,11\rangle=\{0,4,5,8,\dots\}$$ which is
obviously not Arf since $5+5-4=6\not\in\Lambda$, admits $10x_1-9x_2$. \qed
\end{example}

Unfortunately, we can not get the converse of Corollary~\ref{cuatro} for
admissibility degree greater than $2$ as we did in Proposition~\ref{leches}
for substraction patterns and in Proposition~\ref{cinco} for patterns with
admissibility degree less than or equal to $2$.

\begin{example}
\label{ejemplillo}
There exist boolean patterns with the same admissibility degree that are not
equivalent. For instance, the semigroup
$$\langle 5,6,13\rangle= \{0, 5, 6, 10, 11, 12, 13, 15, \dots\}$$
admits the pattern
$$p_1=x_1+x_2+x_3-x_4$$
but it does not admit the pattern
$$p_2=x_1+x_2+x_3+x_4-x_5-x_6$$
($\langle 5,6,13\rangle_{p_2}=\langle {5, 6, 13, 14}\rangle$) and they both have admissibility
degree $3$. \qed
\end{example}

Proposition~\ref{tres} can be extended in order to prove that for $k>2$ there exist infinitely
many boolean patterns with admissibility degree $k$ that are not equivalent.

\begin{proposition}
For $k>2$, the semigroup
$$\Lambda=\langle q,q+1\rangle\cup\{(k-1)(q+1)+1,\ldots,(k-1)(q+1)+(q-k-1)\}\cup\{i\in\N\mid
i\geq kq\}$$ admits a boolean pattern of admissibility degree $k$,
$$p=\sum_{i=1}^l a_ix_i=f(x_1,\ldots,x_{k-1})+g(x_k,\ldots,x_l)+h(x_{l+1},\ldots,x_n)$$
(where $f$, $g$, $h$ are as in Lemma~\ref{uno}) if and only if $d\leq q-k-1$, with
$d=\max_j\left(\sum_{i=k}^ja_i\right)$.
\end{proposition}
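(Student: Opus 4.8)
The plan is to follow the template of Proposition~\ref{tres}, first recording the arithmetic of $\Lambda$ and then splitting the argument into a sufficiency part and a necessity part around the decomposition $p=f+g+h$ of Lemma~\ref{uno}.

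First I would establish the basic invariants of $\Lambda$. Its multiplicity is $q$ (the adjoined elements all exceed $q+1$), and a short computation using $\langle q,q+1\rangle=\{tq+b\mid 0\le b\le t\}$ shows that the interval $[(k-1)q,kq)$ meets $\langle q,q+1\rangle$ exactly in $(k-1)q,(k-1)q+1,\dots,(k-1)(q+1)$, that the adjoined block covers $(k-1)(q+1)+1,\dots,kq-2$, and that everything $\ge kq$ lies in $\Lambda$. Hence the conductor of $\Lambda$ is $kq$ and the only element of $\N\setminus\Lambda$ that is $\ge(k-1)q$ is $\g(\Lambda)=kq-1$. Writing $C_j=\sum_{i=1}^j a_i$, Lemma~\ref{uno} gives $C_j=j$ for $j\le k-1$, $C_j\ge k-1$ for $j\ge k$, and $C_j\le (k-1)+d$ for all $j\ge k$, since $d=(\max_{j\ge k}C_j)-(k-1)$.

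For sufficiency I assume $d\le q-k-1$ and take $s_1\ge\cdots\ge s_n\in\Lambda$. If $s_k<q$ then $s_k=\cdots=s_n=0$ and $p(s)=f(s_1,\dots,s_{k-1})\in\Lambda$. If $s_k\ge q$, then $f\ge(k-1)q$ while $g,h\ge0$ by Lemmas~\ref{dos} and~\ref{11}, so $p(s)\ge(k-1)q$; by the first paragraph it then suffices to rule out $p(s)=kq-1$. If $p(s)=kq-1$ then $f\le kq-1$, which forces $\sum_{i<k}(s_i-q)\le q-1$, hence $s_1,\dots,s_{k-1}\in\{q,q+1\}$, and with $s_k\le s_{k-1}$ every $s_i$ lies in $\{0,q,q+1\}$. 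Setting $k_0=\#\{i:s_i\ge q\}$ and $k_1=\#\{i:s_i=q+1\}$ one gets $p(s)=qC_{k_0}+C_{k_1}$ with $k\le k_0$, and either $C_{k_1}\le(k-1)+d\le q-2$ (when $s_k=q+1$) or $C_{k_1}=k_1\le k-1\le C_{k_0}$ (when $s_k=q$). In the first case $p(s)\equiv C_{k_1}\not\equiv q-1\pmod q$, so $p(s)\ne kq-1$; in the second case $C_{k_1}\le C_{k_0}$ yields $p(s)\in\langle q,q+1\rangle$. Either way $p(s)\ne kq-1$, so $p(s)\in\Lambda$.

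For necessity I assume $d\ge q-k$ and exhibit a bad evaluation. Since $d\ge q-k$, the partial sums attain $C_{j_1}=q-1$ at some $j_1\ge k$; because the coefficients of $g$ sum to $0$ the partial sum returns to the baseline $k-1$ at $j=l$, producing an index $j_2>j_1$ with $C_{j_2}=k-1$. Taking $s_i=q+1$ for $i\le j_1$, $s_i=q$ for $j_1<i\le j_2$, and $s_i=0$ otherwise gives a non-increasing tuple in $\Lambda$ with $p(s)=C_{j_1}\cdot1+C_{j_2}\cdot q=(q-1)+(k-1)q=kq-1=\g(\Lambda)\notin\Lambda$, so $\Lambda$ does not admit $p$. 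The delicate step—the one I would treat most carefully—is the existence of the descending index $j_2>j_1$: one must verify that a partial sum of height $q-1$ is reached before $\sum_{i=k}^{j}a_i$ last returns to $0$, which is exactly the information carried by $d$ together with the balancing negative part of $g$, mirroring the Hermitian computation in Proposition~\ref{tres}.
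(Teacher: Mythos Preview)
Your argument follows the paper's closely. For sufficiency the paper also discards $s_1\ge 2q$, reduces to $s_i\in\{0,q,q+1\}$, and writes
\[
p(s)=(k-1)(q+1)+q\sum_{i=k}^{I}a_i+\sum_{i=k}^{J}a_i
\]
(with $I,J$ playing the role of your $k_0,k_1$), then splits on whether $J<k$, $k\le J\le l$, or $J>l$. Your repackaging as ``show $p(s)\ge(k-1)q$ and rule out $p(s)=kq-1$ via $p(s)=qC_{k_0}+C_{k_1}$'' is a clean reformulation of the same computation.

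For necessity your evaluation is exactly the paper's: pick $J$ (your $j_1$) with $\sum_{i=k}^{J}a_i=q-k$ and plug in $J$ copies of $q+1$, then $l-J$ copies of $q$, then zeros, obtaining $kq-1\notin\Lambda$. The step you call delicate is not: note that the statement writes $p=\sum_{i=1}^{l}a_ix_i$, so $d=\max_j\sum_{i=k}^{j}a_i$ ranges only over $j\le l$ (i.e.\ over $g$). Since the partial sums of $g$ move by $\pm1$, start at $1$, and hit $d\ge q-k$, they pass through $q-k$ at some $j_1<l$ (strict because $\sum_{i=k}^{l}a_i=0$); one then simply takes $j_2=l$. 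No further verification is needed.

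One caution tied to this same point: your inequality $C_j\le(k-1)+d$ for \emph{all} $j\ge k$ assumes $d$ is the maximum over the whole pattern, including $h$. Under the paper's convention (maximum over $g$ only) this can fail when $k_1>l$; the paper handles that range separately by observing that then $C_{k_0}\ge k$, forcing $p(s)\ge kq$ directly. This is a minor patch and does not affect your overall strategy.
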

\begin{proof}
First note that $\Lambda$ is a semigroup because $(k-1)(q-1)+q=kq+k-1\geq kq$.
Suppose $d\leq q-k-1$ and $s_1\geq s_2\geq \dots\geq s_n$. Let $I$ be such
that $s_I\neq 0$ and $s_{I+1}=0$. We can assume that $I>k$, because if $I\leq
k$ it is clear that $p(s_1,\dots,s_n)\in\Lambda$. We can also assume that
$s_1\leq q+1$ because otherwise $s_1\geq 2q$ and $p(s_1,\dots,s_n)\geq
f(s_1,\dots,s_{k-1})\geq kq$. Now let $J$ be such that $s_J=q+1$ and
$s_{J+1}<q+1$ (if such a $J$ does not exist, clearly
$p(s_1,\ldots,s_n)\in\Lambda$). If $J<k$, then $p(s_1,\dots,s_n)=J(q+1)+aq$
for some $a\geq 0$. Besides, if $J>l$, then $p(s_1,\dots,s_n)\geq kq$. So we
can assume $k\leq J\leq l$.
 In this case,
 \begin{multline*}p(s_1,\dots,s_n)=(k-1)(q+1)+\sum_{i=k}^Ja_i(q+1)+\sum_{i=J+1}^Ia_iq\\=
(k-1)(q+1)+q\sum_{i=k}^Ia_i+\sum_{i=k}^Ja_i\\  \left\{\begin{array}{ll}
\geq kq & \text{\ if\ } I>l \text{\ or\ } \sum_{i=k}^Ia_i>0, \\
=(k-1)(q+1)+d'\text{\ with\ }d'\leq q-k-1 & \text{\ if\ } I\leq l \text{\ and\ }
\sum_{i=k}^Ia_i=0.
\end{array} \right.
\end{multline*}
For the converse, if $d\geq q-k$, let $J$ be such that $q-k=\sum_{i=k}^J a_i$, then
\begin{multline*}
p(\underbrace{q+1,\dots,q+1}_{J)},\underbrace{q,\dots,q}_{l-J)},0,\dots,0)=
(k-1)(q+1)+(q-k)(q+1)-(q-k)q\\=(k-1)(q+1)+q-k=qk-1\not\in\Lambda
\end{multline*}
\end{proof}

This proves that two equivalent patterns of the same admissibility degree $k>2$ must have the
same value $d=\max_j\left(\sum_{i=k}^ja_i\right)$.

\begin{example}
Let  $p_1$ and $p_2$ be the patterns defined in Example~\ref{ejemplillo}. According to Lemma
\ref{uno}, the patterns $p_1$ and $p_2$ can be expressed as $p_1=f_1+g_1+h_1$ and
$p_2=f_2+g_2+h_2$ with $p_1=x_1+x_2=p_2$, $g_1=x_3-x_4$, $g_2=x_3+x_4-x_5-x_6$ and $h_1=0=h_2$.
Hence, the value of $d$ for $p_1$ and $p_2$ is $1$ and $2$, respectively. \qed
\end{example}

\begin{example}
Furthermore, there exist patterns with the same admissibility degree $k$ and the same $d$
which
are not equivalent. For instance, the semigroup
$$\langle 7,8,17,26\rangle= \{0, 7, 8, 14, 15, 16, 17, 21, 22, 23, 24, 25, 26, 28, \dots\}$$
admits the pattern
$$p_1=x_1+x_2+x_3-x_4$$
but it does not admit the pattern
\[p_2=x_1 + x_2 + x_3 - x_4 + x_5 + x_6 - x_7 - x_8\]
($\langle 7,8,17,26\rangle_{p_2}=\langle 7, 8, 17, 18, 27\rangle$) and they both have
admissibility degree $3$ and $d=1$.

\qed
\end{example}


\begin{thebibliography}{10}

\bibitem{apery} R. Ap\'{e}ry, Sur les branches superlin\'{e}aires des
    courbes alg\'{e}briques,  C. R. Acad. Sci. Paris, {\bf 222} (1946), 1198--1200.

\bibitem{Arf} C. Arf, Une interpr\'etation alg\'ebrique de la suite des ordres
  de multiplicit\'e d'une branche alg\'ebrique, Proc. London Math. Soc.,
  \textbf{20} (1949), 256-287.

\bibitem{BaDoFo} V. Barucci, D. E. Dobbs and M. Fontana, ``Maximality
  Properties in Numerical Semigroups and Applications to One-Dimensional
  Analytically Irreducible Local Domains", Memoirs of the Amer.  Math. Soc.
  {\bf 598} (1997).

\bibitem{Bras:AAECC}
 M. Bras-Amor\'{o}s,
 {Improvements to Evaluation Codes and New Characterizations of {A}rf Semigroups}, in
 {Applied Algebra, Algebraic Algorithms and Error-Correcting
              Codes (Toulouse, 2003)},
 {Lecture Notes in Comput. Sci.},
 {Springer},
 {2003}.

\bibitem{Bras:acute}
 M. Bras-Amor\'os, Acute Semigroups, the Order Bound on the Minimum Distance
          and the Feng-Rao Improvements, {IEEE} Trans. Inform.
          Theory {\bf 50} (6) (2004), 1282--1289.

\bibitem{BrOS}
 M. Bras-Amor\'{o}s and M. E. O'Sullivan,
{The Correction Capabability of the Berlekamp-Massey-Sakata Algorithm
with Majority Voting},
Submitted.

\bibitem{brauer} A. Brauer, On a problem of partitions, Amer. J. Math.
  {\bf 64} (1942), 299-312.

\bibitem{brauer-shockley} A. Brauer and J. E. Schockley, On a problem
  of Frobenius, J. Reine Angew. Math. {\bf 211} (1962), 215-220.

\bibitem{CaFaMu:arf}
 A. Campillo, J.I. Farr{\'a}n and C. Munuera,
 {On the parameters of algebraic-geometry codes related to {A}rf
              semigroups},
 {IEEE Trans. Inform. Theory}
 {\bf 46} (2000), {2634--2638}.

\bibitem{FeRa:dFR}
G.~L. Feng and T.~R.~N. Rao.
\newblock A simple approach for construction of algebraic-geometric codes from
  affine plane curves.
\newblock {\em IEEE Trans. Inform. Theory}, 40(4):1003--1012, 1994.

\bibitem{HoLiPe:agc}
 T. H\o{holdt}, J.H. van Lint, and R. Pellikaan,
 {Algebraic Geometry codes}, in
 {Handbook of coding theory. {V}ol. {I} (Ed. Pless, V. S. and Huffman, W. C. and Brualdi, R. A.)},
 {North-Holland},
 {Amsterdam},
 {1998},
 {871--961}.

\bibitem{johnson} S. M. Johnson, A linear diophantine problem, Can. J.
  Math. {\bf 12} (1960),390-398.

\bibitem{KiPe:telescopic}
C. Kirfel and G.~R. Pellikaan.
\newblock The minimum distance of codes in an array coming from telescopic
  semigroups.
\newblock {\em IEEE Trans. Inform. Theory}, 41(6, part 1):1720--1732, 1995.
\newblock Special issue on algebraic geometry codes.

\bibitem{oversemigroups} J. C. Rosales, P. A. Garc\'{\i}a-S\'anchez, J. I. Garc\'{\i}a-Garc\'{\i}a and
J. A. Jim\'enez-Madrid, The oversemigroups of a numerical semigroup, Semigroup Forum {\bf 67}
(2003), 145--158.

\bibitem{RoGaGaBr}
 {J. C. Rosales and P. A. Garc\'{\i}a-S\'{a}nchez and J. I. Garc\'{\i}a-Garc\'{\i}a and M. B. Branco},
 {Arf numerical semigroups}, J. Algebra {\bf 276} (2004), 3--12.

\bibitem{selmer} E. S. Selmer, On a linear diophantine problem of
  Frobenius, J. Reine Angew. Math. {\bf 293/294} (1977), 1-17.

\bibitem{Stichtenoth}
H. Stichtenoth, A note on Hermitian codes over ${\rm GF}(q\sp 2)$. Coding techniques and coding
theory. IEEE Trans. Inform. Theory {\bf 34} (1988), no. 5, part 2, 1345--1348.

\bibitem{sylvester} J. J. Sylvester, Mathematical questions with their solutions
, Educational Times {\bf 41} (1884), 21.

\end{thebibliography}

\end{document}